\documentclass[11pt]{article}
\usepackage[margin=2.2cm,centering,nohead]{geometry}
\usepackage{amsmath, amsthm, amssymb}
\usepackage{mathrsfs}
\usepackage{graphicx}
\usepackage[small,it]{caption}
\usepackage{tikz}
\usetikzlibrary{calc,backgrounds,patterns,decorations.pathreplacing}
\usepackage[T1]{fontenc}
\usepackage[sc]{mathpazo}
\usepackage{textcomp} 
\usepackage{parskip}
\usepackage{verbatim}
\linespread{1.05}

\usepackage{todonotes}

\definecolor{darkblue}{rgb}{0, 0, .4}
\usepackage[bookmarks]{hyperref}
\hypersetup{
    colorlinks=true,
    linkcolor=darkblue,
    anchorcolor=darkblue,
    citecolor=darkblue,
    urlcolor=darkblue,
    pdfpagemode=UseThumbs,
    pdftitle={Combinatorial specifications for juxtapositions of permutation classes},
    pdfsubject={Combinatorics},
    pdfauthor={R. Brignall and J. Sliacan},
    pdfkeywords={permutation,pattern,simple permutation,grid classes}
}

\title{Combinatorial specifications for juxtapositions of permutation classes}
\author{
Robert Brignall\thanks{Email addresses: \texttt{rbrignall@gmail.com, jakub.sliacan@gmail.com}.}\\[10pt]
\small School of Mathematics and Statistics\\
\small The Open University\\
\small Milton Keynes, MK7 6AA\\
\small United Kingdom
\and
Jakub Slia\v{c}an\footnotemark[1]\\[10pt]
\small Matematik och matematisk statistik\\
\small Ume\aa\ universitet\\
\small 901 87 Ume\aa\\
\small Sweden
}

\newcommand{\A}{\mathcal{A}}

\newcommand{\AR}{\A^{\mathsf{R}}}

\newcommand{\B}{\mathcal{B}}

\newcommand{\C}{\mathcal{C}}
\newcommand{\Cp}{\C^{\oplus}}
\newcommand{\Cm}{\C^{\ominus}}
\newcommand{\CL}{\C^{\mathsf{L}}}
\newcommand{\CR}{\C^{\mathsf{R}}}
\newcommand{\CLR}{\C^{\mathsf{LR}}}

\newcommand{\D}{\mathcal{D}}
\newcommand{\DD}{\ensuremath{\mathcal{D}}}

\newcommand{\EE}{\mathcal{E}}
\newcommand{\F}{\mathcal{F}}

\newcommand{\M}{\ensuremath{\mathcal{M}}}

\renewcommand{\S}{\mathcal{S}}

\newcommand{\SZ}{\S_{\Z}}
\newcommand{\SZp}{\S_{\Z}^+}

\newcommand{\V}{\ensuremath{\mathcal{V}}}
\newcommand{\Z}{\mathcal{Z}}
\newcommand{\ZL}{\Z^{\mathsf{L}}}
\newcommand{\ZR}{\Z^{\mathsf{R}}}
\newcommand{\ZLR}{\Z^{\mathsf{LR}}}
\newcommand{\bop}{\mathcal{O}}

\newcommand{\seq}{\textsc{Seq}}
\newcommand{\seqp}{\seq^+}

\newcommand{\Av}{\operatorname{Av}}
\newcommand{\Si}{\operatorname{Si}}

\newcommand{\rS}{\ensuremath{\mathscr{S}}}

\renewcommand{\o}{{\epsilon}}
\renewcommand{\i}{{\bullet}}
\newcommand{\oo}{{\circ\circ}}
\newcommand{\oi}{{\circ\bullet}}
\newcommand{\io}{{\bullet\circ}}
\newcommand{\ii}{{\bullet\bullet}}
\newcommand{\opo}{\Omega_\o}
\newcommand{\opi}{\Omega_\i}
\newcommand{\opoo}{\Omega_\oo}
\newcommand{\opio}{\Omega_\io}
\newcommand{\opoi}{\Omega_\oi}
\newcommand{\opii}{\Omega_\ii}

\newcommand{\cplusc}[2]{
  \begin{tikzpicture}[baseline=4ex, scale=0.6]
      \draw (0,0) rectangle (1,1) node[pos=0.5]{\ensuremath{#1}};
      \draw (1,1) rectangle (2,2) node[pos=0.5]{\ensuremath{#2}};
    \end{tikzpicture}}

 \newcommand{\cminusc}[2]{
  \begin{tikzpicture}[baseline=4ex, scale=0.6]
      \draw (1,0) rectangle (2,1) node[pos=0.5]{\ensuremath{#1}};
      \draw (0,1) rectangle (1,2) node[pos=0.5]{\ensuremath{#2}};
  \end{tikzpicture}}

\begingroup
    \makeatletter
    \@for\theoremstyle:=definition,remark,plain\do{%
        \expandafter\g@addto@macro\csname th@\theoremstyle\endcsname{%
            \addtolength\thm@preskip\parskip
            }%
        }
\endgroup

\theoremstyle{plain}
\newtheorem{theorem}{Theorem}[section]

\newtheorem{lemma}[theorem]{Lemma}

\newtheorem{corollary}[theorem]{Corollary}

\newtheorem{proposition}[theorem]{Proposition}
\newtheorem{observation}[theorem]{Observation}

\usepackage{paralist}

\usepackage{ifthen}
\usetikzlibrary{calc}

\newcommand{\plotptradius}{4pt}

\tikzset{permpt/.style={circle, draw, fill=black, inner sep=0pt, minimum width=\plotptradius}}
\tikzset{empty/.style={draw=none, fill=none}}

\newcommand\absdot[2]{
	\node[permpt] at #1 {};
}

\newcommand{\plotperm}[2][black]{ 
	\foreach \j [count=\i] in {#2} {
        \ifnum0=\j {} \else {
 		\node[permpt,fill=#1,draw=#1] (\j) at (\i,\j) {};
	} \fi
	};
}

\newcommand{\plotpinsequence}[1]{
	\absdot{(0,0)}{};
	\edef\n{0}
	\edef\s{0}
	\edef\e{0}
	\edef\w{0}
	\edef\x{0}
	\edef\y{0}
	\foreach \pin [remember=\pin as \oldpin (initially 1), count=\i] in {#1} {
		\ifthenelse{\pin=1 \OR \pin=2}{
			\ifthenelse{\oldpin=3}{
				\xdef\x{\number\numexpr\e-1}
			}{
				\xdef\x{\number\numexpr\w+1}
			}
			\ifnum\i=1 
				\pgfmathparse{\e+1}
 				\xdef\e{\pgfmathresult}
			\fi	
		}{ 
			\ifthenelse{\oldpin=1}{
				\xdef\y{\number\numexpr\n-1}
			}{
				\xdef\y{\number\numexpr\s+1}
			}
			\ifnum\i=1 
				\pgfmathparse{\s-1}
 				\xdef\s{\pgfmathresult}
			\fi	
		}
		\ifnum\pin=1 
			\pgfmathparse{\n+2}
 			\xdef\n{\pgfmathresult}		
			\absdot{(\x,\n)}{};
			\ifnum\i>1
				\draw (\x,\n) -- (\x,\y-0.5);
			\else
				\draw[gray,very thick] (-0.5,-0.5) rectangle (\x+0.5,\n+0.5);
			\fi
		\fi
		\ifnum\pin=2 
			\pgfmathparse{\s-2}
 			\xdef\s{\pgfmathresult}
			\absdot{(\x,\s)}{};
			\ifnum\i>1
				\draw (\x,\s) -- (\x,\y+0.5);
			\else
				\draw[gray,very thick] (-0.5,0.5) rectangle (\x+0.5,\s-0.5);
			\fi
		\fi
		\ifnum\pin=3 
			\pgfmathparse{\e+2}
 			\xdef\e{\pgfmathresult}
			\absdot{(\e,\y)}{};
			\ifnum\i>1
				\draw (\e,\y) -- (\x-0.5,\y);
			\else
				\draw[gray,very thick] (-0.5,+0.5) rectangle (\e+0.5,\y-0.5);
			\fi
		\fi
		\ifnum\pin=4 
			\pgfmathparse{\w-2}
 			\xdef\w{\pgfmathresult}
			\absdot{(\w,\y)}{};
			\ifnum\i>1
				\draw (\w,\y) -- (\x+0.5,\y);
			\else
				\draw[gray,very thick] (0.5,0.5) rectangle (\w-0.5,\y-0.5);

			\fi
		\fi		
	};
}

\begin{document}
\maketitle
\begin{abstract}
We show that, given a suitable combinatorial specification for a permutation class $\mathcal{C}$, one can obtain a specification for the juxtaposition (on either side) of $\mathcal{C}$ with Av(21) or Av(12), and that if the enumeration for $\mathcal{C}$ is given by a rational or algebraic generating function, so is the enumeration for the juxtaposition. Furthermore this process can be iterated, thereby providing an effective method to enumerate any `skinny' $k\times 1$ grid class in which at most one cell is non-monotone, with a guarantee on the nature of the enumeration given the nature of the enumeration of the non-monotone cell.
\end{abstract}

%
%
%
%
%
%
%
%
\section{Introduction}\label{sec-intro}

A phenomenon observed both in the practical enumeration of specific permutation classes and in the study of growth rates is that in many classes, the entries of any permutation in the class can be partitioned in such a way as the pattern formed by the entries in each part lies in some specified proper subclass.

As well as very general compositions such as the \emph{merge} of two classes (exemplified by the class $\Av(321)$, being the merge of two increasing sequences), one more restrictive (but nonetheless ubiquitous) instance of this phenomenon is the \emph{juxtaposition} of two classes, first considered by Atkinson~\cite{atkinson:restricted-perm:}. For a recent comprehensive survey of the study of permutation patterns, and particularly notions of structure, see Vatter's chapter in the \emph{Handbook of enumerative combinatorics}~\cite{vatter:survey}.

Formally, let $\sigma$ and $\tau$ be (possibly empty) permutations. A permutation $\pi=\pi(1)\cdots\pi(n)$ of length $n$ is a \emph{juxtaposition of $\sigma$ with $\tau$} if there exists an index $i$ such that $\pi(1)\cdots \pi(i)$ is order-isomorphic to $\sigma$, and $\pi(i+1)\cdots\pi(n)$ is order-isomorphic to $\tau$. The \emph{juxtaposition} of two classes or sets of permutations $\C$ and $\D$ is then defined by
\[ \C\mid\D = \{\pi : \pi \text{ is a juxtaposition of }\sigma\in\C\cup\{\epsilon\}\text{ with }\tau\in\D\cup\{\epsilon\}\}.\]
In this paper, we consider the effect on combinatorial specifications of juxtapositions. The juxtaposition of two permutations can be thought of as a special kind of merge, in which only the \emph{values} of the two permutations can be interleaved arbitrarily, and not the positions. However, our primary motivation in studying juxtapositions is towards a generalisation in another direction, namely the broader study of \emph{grid classes}. 

Our main result is as follows. Definitions are given in Section~\ref{sec-prelim}, but we note here that context-free specifications give rise to algebraic generating functions, while regular ones give rational generating functions.

\begin{theorem}\label{thm:main}
Let $\C$ be a permutation class that admits a bottom-to-top combinatorial specification $\rS$, and let $\M \in\{\Av(21),\Av(12)\}$.
\begin{enumerate}[(i)]
\item\label{item:CM} If $\rS$ tracks the rightmost entry, then there exists a combinatorial specification for $\C\mid\M$ that tracks the rightmost entry. 
\item\label{item:MC} Similarly, if $\rS$ tracks the leftmost entry, then there exists one for $\M\mid\C$ that tracks the leftmost entry. 
\item\label{item:leftright} If $\rS$ tracks both the leftmost and the rightmost entries, then there exists specifications for $\C\mid\M$ and $\M\mid\C$ that do.
\item\label{item:contextfree} If $\rS$ is context-free (resp. regular), then the specifications for $\C\mid\M$ and $\M\mid\C$ are context-free (regular).
\end{enumerate}
\end{theorem}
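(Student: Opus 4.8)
The plan is to treat the four parts as a single construction, since (iv) essentially asks us to observe that the construction from (i)--(iii) does not inflate the complexity of the grammar. So I will focus on building, given a bottom-to-top specification $\rS$ for $\C$ tracking the rightmost entry, a specification for $\C\mid\Av(21)$ that also tracks the rightmost entry; the other cases follow by symmetry (reversing or complementing, which swaps left/right and swaps $\Av(21)$ with $\Av(12)$), and tracking both leftmost and rightmost entries is handled by carrying both catalytic parameters through the same argument. The key structural observation is that a permutation in $\C\mid\Av(21)$ splits as $\sigma$ (order-isomorphic to a member of $\C$) followed by an increasing sequence $\tau$; what makes this tractable is that the \emph{values} of $\tau$ may be interleaved arbitrarily with those of $\sigma$, but the \emph{positions} of $\tau$'s entries all come after those of $\sigma$'s. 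So I would first set up a bookkeeping device: decorate each nonterminal $X$ of $\rS$ by a ``split point'' recording which prefix of the eventual increasing sequence $\tau$ lies (in value) below the region spanned by $X$, so that the increasing sequence gets threaded through the recursive structure of $\C$ in a way compatible with the block decompositions appearing in the specification.

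Concretely, the first step is to recall from Section~\ref{sec-prelim} exactly what a bottom-to-top specification is and how it builds permutations from the bottom upwards via sums, skew sums, substitution into monotone or simple permutations, and sequences; the rightmost-entry tracking gives us, for each nonterminal, a catalytic variable marking where the last position sits relative to the values seen so far. The second step, the heart of the argument, is to define new nonterminals of the form $X^{\mathsf{R}}$ (and combinations like $X^{\mathsf{LR}}$ for part (iii)) that stand for ``a member of the language of $X$, with an increasing sequence appended on the right whose entries are freely interleaved in value.'' For each production of $\rS$ I would write down the corresponding production(s) for the decorated nonterminals: the appended increasing sequence has to be distributed among the summands/blocks of the right-hand side, and because the sequence is increasing, the portion allocated to a lower block must lie entirely below the portion allocated to a higher block — this is exactly the kind of constraint a context-free grammar can enforce by splitting the sequence at nonterminals and passing the split point along. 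The rightmost-entry catalytic variable of the new nonterminal then records the position and value of the last entry of the appended sequence (or of the original permutation, if the appended sequence is empty). I would also need a small ``base'' gadget: a nonterminal generating $\Av(21)$ itself tracking its rightmost entry, which is a standard one-variable (in fact regular) grammar, to seed the interleaving.

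The third step is to verify correctness — that the decorated grammar generates exactly $\C\mid\Av(21)$ with the claimed tracking — which is a routine induction on derivation length once the productions are in place, using the uniqueness of the bottom-to-top block decomposition at each step so that no permutation is generated twice. The fourth step, giving (iv), is the bookkeeping observation that each production of $\rS$ gives rise to only finitely many productions of the decorated grammar (one for each way of ``cutting'' the appended increasing sequence among a bounded number of blocks), that each such production has right-hand side of bounded length when $\rS$'s productions do, and that no new nonterminals of unbounded arity are introduced; hence context-free stays context-free and regular stays regular (in the regular case the block structures are linear, so the sequence only ever has to be passed to one nonterminal at a time, keeping the grammar right-linear). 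The main obstacle I anticipate is the combinatorial case analysis in step two: handling substitution into a simple permutation cleanly, where the appended increasing sequence must be split among several blocks simultaneously and one must argue that the induced order constraints among the pieces are precisely captured by independent recursive calls to the decorated nonterminals — keeping the catalytic variables consistent across that split, and making sure the ``rightmost entry'' of the whole object is correctly identified as the rightmost entry of whichever block is last, is where the care is needed.
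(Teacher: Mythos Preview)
Your overall architecture---thread an increasing sequence through the gaps of a bottom-to-top specification, then use symmetry for the other three cases---matches the paper's. But there is a genuine gap in your plan, and it is exactly the point where the hypothesis ``$\rS$ tracks the rightmost entry'' does real work.

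You write that a permutation $\pi\in\C\mid\Av(21)$ ``splits as $\sigma$ followed by an increasing sequence $\tau$'', and later that uniqueness will follow from ``the uniqueness of the bottom-to-top block decomposition at each step''. The first claim hides the problem and the second does not fix it: a given $\pi$ typically admits \emph{many} such splits (e.g.\ $12$ can be gridded as $\epsilon\mid 12$, $1\mid 2$, or $12\mid\epsilon$), and the block decomposition of $\sigma$ only guarantees that a \emph{fixed} $\sigma$ is parsed uniquely---it says nothing about the choice of $\sigma$ itself. Without a canonical gridding your decorated grammar will overcount. The paper resolves this with a \emph{greedy} gridding: push the vertical line as far left as possible, so $\tau$ is maximal. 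Concretely this forces at least one entry of $\tau$ to lie \emph{below} the rightmost entry of $\sigma$ (otherwise that rightmost entry could have been absorbed into $\tau$). Enforcing this condition is precisely why the input specification must track $\ZR$: the operators that insert the lowest monotone point are \emph{$\ZR$-sensitive}, only permitted to act on gaps below $\ZR$. In your plan, rightmost-tracking appears only as something to be preserved in the output (``the rightmost-entry catalytic variable of the new nonterminal then records\dots''), not as the mechanism that makes the construction injective in the first place.

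A second, smaller point: your step two leans on ``substitution into simple permutations'' as the shape of productions. The paper's operators are defined purely in terms of sums, Cartesian products, and $\seq$, so they apply to any bottom-to-top specification (e.g.\ the one for $\Av(321)$ coming from Dyck paths), not only to substitution-decomposition grammars. Restricting to the simple-permutation picture would weaken the result and complicate the case analysis you anticipate; working at the level of abstract $+$, $\times$, $\seq$ is both more general and cleaner.
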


Since the resulting specifications satisfy the same conditions as the theorem requires of $\C$, the process can be iterated, thereby allowing us to generate combinatorial specifications for classes of the form given in Figure~\ref{fig:k-by-one}.
\begin{figure}[h]
\begin{center}
\begin{tikzpicture}
\draw (0,0) grid (1.2,1);
\draw (2.8,0) grid (6.2,1);
\draw (7.8,0) grid (9,1);
\foreach \i in {1,6} {%
  \draw[dashed] (\i,0) -- (\i+2,0);
  \draw[dashed] (\i,1) -- (\i+2,1);
}
\node[empty] at (0.5,0.5) {$\M_1$};
\node[empty] at (3.5,0.5) {$\M_{j-1}$};
\node[empty] at (4.5,0.5) {$\C$};
\node[empty] at (5.5,0.5) {$\M_{j+1}$};
\node[empty] at (8.5,0.5) {$\M_k$};
\node[empty] at (2,0.5) {$\cdots$};
\node[empty] at (7,0.5) {$\cdots$};
\end{tikzpicture}
\end{center}
\caption{The $k\times 1$ grid classes in Corollary~\ref{cor:skinny}, in which $\C$ possesses a rightmost- and leftmost-entry tracking specification, and $\M_i \in\{\Av(21),\Av(12)\}$ for all $i\neq j$.}
\label{fig:k-by-one}
\end{figure}
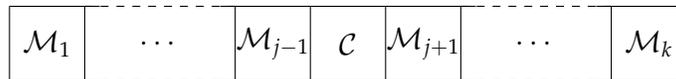

\begin{corollary}\label{cor:skinny}
If a permutation class $\C$ possesses a specification $\rS$ that tracks the rightmost and leftmost entries, then so does any $k\times 1$ grid class of the form given in Figure~\ref{fig:k-by-one}. In particular, if $\C$ possesses an algebraic or rational generating function, then so too does the $k\times 1$ grid class.
\end{corollary}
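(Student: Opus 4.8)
The plan is to induct on $k$, peeling the monotone cells off one at a time and invoking Theorem~\ref{thm:main} at each step. First I would record the elementary fact that juxtaposition is associative, in the sense that $(\C_1\mid\C_2)\mid\C_3=\C_1\mid(\C_2\mid\C_3)$: unwinding the definition, both sides consist exactly of those $\pi$ whose entries split into three position-consecutive blocks lying (up to order-isomorphism) in $\C_1\cup\{\epsilon\}$, $\C_2\cup\{\epsilon\}$, $\C_3\cup\{\epsilon\}$ respectively. Consequently the $k\times1$ grid class depicted in Figure~\ref{fig:k-by-one} is precisely the iterated juxtaposition
\[ \M_1\mid\cdots\mid\M_{j-1}\mid\C\mid\M_{j+1}\mid\cdots\mid\M_k, \]
bracketed in whatever order is convenient.

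The induction then builds the specification \emph{outward} from the cell containing $\C$. Set $\D_0=\C$, which by hypothesis admits a bottom-to-top specification tracking both the leftmost and the rightmost entries. Adding the cells to the right of $\C$ one at a time: given such a specification for $\D_{t-1}$, Theorem~\ref{thm:main}(\ref{item:leftright}) produces one for $\D_t:=\D_{t-1}\mid\M_{j+t}$ that again tracks both endpoints, and — since, as noted in the text, the output satisfies the same hypotheses as the input — is again a bottom-to-top specification, so that the hypothesis of the theorem is reinstated for the next step. After exhausting $\M_{j+1},\dots,\M_k$ in this way, repeat the process on the left, at each stage replacing the current class $\D$ by $\M_{j-s}\mid\D$ and again appealing to Theorem~\ref{thm:main}(\ref{item:leftright}). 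After $k-1$ applications we arrive at a bottom-to-top specification for the whole grid class that tracks the leftmost and rightmost entries, which is the first assertion.

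For the enumerative statement, observe that each of the $k-1$ steps also preserves the syntactic type of the specification, by Theorem~\ref{thm:main}(\ref{item:contextfree}): if the specification for $\C$ is context-free then so is the specification produced at every stage, and likewise for regular. Since context-free specifications yield algebraic generating functions and regular ones yield rational generating functions (as recorded immediately before Theorem~\ref{thm:main}), the $k\times1$ grid class has an algebraic, respectively rational, generating function whenever $\C$ is presented by a specification of the corresponding type.

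I do not anticipate a real obstacle: the substance is entirely contained in Theorem~\ref{thm:main}, and part~(\ref{item:leftright}) exists precisely to let the induction run on both sides — with only parts~(\ref{item:CM}) and~(\ref{item:MC}) one could lose leftmost-tracking after a step on the right and be unable to continue on the left. The one point deserving care is the direction of the construction: since only the cell holding $\C$ carries a non-trivial specification to begin with, and Theorem~\ref{thm:main} juxtaposes a monotone class onto a class that \emph{already} has a specification (not the reverse), one must grow the specification outward from $\C$ rather than attempt to `insert' $\C$ into a prebuilt monotone grid; associativity of juxtaposition is exactly what guarantees that the outward-built object is the intended grid class.
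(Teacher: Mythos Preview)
Your proposal is correct and matches the paper's approach: the paper does not give a formal proof of the corollary, merely remarking (in the sentence preceding its statement) that ``the resulting specifications satisfy the same conditions as the theorem requires of $\C$, [so] the process can be iterated''. You have supplied exactly the details this sentence elides --- associativity of juxtaposition, building outward from the cell containing $\C$, and invoking parts~(\ref{item:leftright}) and~(\ref{item:contextfree}) of Theorem~\ref{thm:main} at each step --- and your caveat that the enumerative claim should be read as ``$\C$ is presented by a context-free (resp.\ regular) specification'' rather than the bare ``$\C$ has an algebraic (resp.\ rational) generating function'' is the correct reading of what the paper intends.
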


Permutation classes to which our method applies include any class $\C$ that contains only finitely many simple permutations, but it is not limited to this (for example, in Section~\ref{sec-applicable-classes} we present a suitable specification for $\Av(321)$). By its nature, there is a parallel between our bottom-to-top specifications and the insertion encoding of Albert, Linton and Ru\v{s}kuc~\cite{albert:the-insertion-e:}, and we explore this further in Section~\ref{sec-applicable-classes}.

Prior to this paper, the most general result for $k\times 1$ grids concerns the case where the class $\C$ above is itself also monotone. Such grid classes were called `skinny' grid classes by Bevan~\cite[Chapter 3]{bevan:phd}, where he demonstrates an iterative technique for enumerating any $k\times 1$ grid in which all the cells are monotone. Indeed, the enumeration of such classes was also studied under the term `monotone segment sets' by Albert, Atkinson and Ru\v{s}kuc~\cite{albert:regular-closed-:}, where it was shown that all such classes can be encoded in a regular language, and hence possess rational generating functions. Meanwhile, the process of juxtaposing some permutation class $\C$ with a monotone one plays a key role in the classification of possible growth rates, since each such juxtaposition has a growth rate that is precisely one more than that of $\C$, see Vatter~\cite{vatter:every-growth-rate:} and Bevan~\cite{bevan:intervals}.

More recently, in~\cite{bs:juxt-catalan} the current authors enumerated all juxtapositions of the form $\C\mid\M$ where $\C$ is a `Catalan' class (i.e.\ one of the classes avoiding a single length 3 permutation, all of which are enumerated by the Catalan numbers), and $\M$ is monotone. Our approach in this article is different from the technique employed in that earlier paper, and is considerably more general.

Our method is fully constructive, and relies on operators that act on the equations of the combinatorial specification. Thus, it could be used for practical enumeration tasks, although each application typically results in a sixfold-increase in the number of equations of the specification. Perhaps a more significant consequence of our approach is the general theory, offering a guarantee of the nature of combinatorial specifications and their related enumerations, which could be used to inform future development of algorithms such as \textsc{TileScope} and its underpinning \textsc{CombSpecSearcher}, see Bean's PhD thesis~\cite{bean:thesis}.

The rest of this paper is organised as follows. Following some preliminary definitions and results in Section~\ref{sec-prelim}, the proof of Theorem~\ref{thm:main} is given in Section~\ref{sec-operators}. In Sections~\ref{sec-applicable-classes} and~\ref{sec-examples}, we present a number of classes to which this method applies, and demonstrate the process by computing the specifications and enumerations of three specific juxtaposition classes. Some concluding remarks are given in Section~\ref{sec-conclusion}.

%
%
%
%
%
%
%
%
\section{Combinatorial specifications and permutation classes}\label{sec-prelim}

We will begin by covering a few of the most pertinent  basic definitions relating to the study of permutations, but refer the reader to Bevan's introduction~\cite{bevan2015defs} for others. We say that a permutation $\sigma$ is \emph{contained} in another $\pi$, and write $\sigma\leq\pi$ if there is a subsequence of $\pi$ that is \emph{order-isomorphic} to $\sigma$, i.e.\ a subsequence of the entries of $\pi$ possesses the same relative ordering as the entries of $\sigma$. A \emph{permutation class} is a set of permutations that is closed under the permutation containment ordering, that is, if $\C$ is a permutation class, $\pi\in\C$ and $\sigma\leq \pi$, then we must have $\sigma\in \C$. 

If there is no subsequence of $\pi$ order-isomorphic to $\sigma$, then we say that $\pi$ \emph{avoids} $\sigma$. A permutation class can be defined by its unique minimal avoidance set, which we call the \emph{basis}. We write $\C=\Av(B)$ to mean that $\C$ is the permutation class comprising all permutations that avoid every permutation in $B$,
\[\C = \Av(B) = \{\pi:\pi\not\leq \beta \text{ for all }\beta\in B\}.\]
We will also typically drop the set braces in this notation, thus writing, e.g., $\Av(321)$ instead of $\Av(\{321\})$.


\paragraph{Combinatorial classes and specifications}
A \emph{combinatorial class} is a pair $(\C,|\cdot|)$ where $\C$ is a countable set of combinatorial objects, e.g.~permutations, and $|\cdot|:\C \to \mathbb{Z}^{+}$ is a size function such that the number of objects of size $n$ in $\C$ is finite. In this paper, we will primarily be concerned with combinatorial classes which are sets of permutations.\footnote{In an unfortunate clash of notation, note that a `combinatorial class of permutations' need not be a `permutation class' as it need not be downwards-closed.} However, we will periodically need to consider related objects which will be in bijection with some set of permutations. Formally, we say that two combinatorial classes are \emph{combinatorially isomorphic} if their counting sequences are identical. This is equivalent to the existence of a size-preserving bijection between the two classes.

Following Flajolet and Sedgewick~\cite{flajolet:analytic-combin:}, given a combinatorial class $\C$, a \emph{combinatorial specification} for $\C$ is a system of equations
\[\left\{\begin{array}{r@{}l@{}}
      \C = \S_1 &= f_1(\EE,\Z,\S_1,\ldots,\S_r)\\
      &\vdots\\
      \S_r &= f_r(\EE,\Z,\S_1,\ldots,\S_r)
\end{array}\right.\]
where each $f_i$ is a constructor that only involves \emph{admissible} operations acting on the classes $\S_1,\dots,S_r$, the atom $\Z$ and the empty class $\EE$. The only admissible operations we will consider are the combinatorial sum $+$, Cartesian product $\times$, and the sequence construction $\seq(\cdot)$. Crucially, note that these admissible operations are not necessarily commutative, and indeed we will assume throughout that the Cartesian product is noncommutative.

For convenience, throughout this paper we will write equations such as those in the above specification as $\S_i = f_i(\V)$, where $\V = \{\S_1,\dots,\S_r\}$ is the set of classes. Note that we have omitted the empty class $\EE$ and any atoms: the function $f_i$ may depend on these implicitly.

A priori, the constructors $f_i$ in a combinatorial specification could define recurrences that are (for example) tautological, or impossible to solve. By placing additional conditions on the constructors $f_i$, we can restrict our view to certain types of combinatorial specifications which can always be solved. We highlight two in particular here: 
\begin{description}
\item[Regular]
A combinatorial specification for $\C$ is \emph{regular} if the constructors involve only atoms and the three admissible operations $+$, $\times$ and $\seq(\cdot)$.
\item[Context-free]
A combinatorial specification for $\C$ is \emph{context-free} if the constructors use only the admissible operations $+$ and $\times$.
\end{description}

It is possible to see the family of context-free specifications as more general than regular specifications by observing that the equation $\SZ = \EE + \SZ\times \Z$ is combinatorially isomorphic to $\SZ=\seq(\Z)$, i.e.\ the sequence construction. Indeed, in what follows we will periodically use the class $\SZ$, together with its context-free combinatorial specification, to correspond to $\seq(\Z)$.

The attraction of regular and context-free specifications is due to the following two results:

\begin{proposition}[See, e.g.\ Proposition I.2~\cite{flajolet:analytic-combin:}]\label{prop:regular}
A combinatorial class that has a regular specification admits a rational (ordinary) generating function.
\end{proposition}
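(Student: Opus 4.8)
The statement to prove is Proposition~\ref{prop:regular}: a combinatorial class with a regular specification has a rational generating function. The plan is to translate the system of combinatorial equations into a system of functional equations for the ordinary generating functions, using the standard dictionary between admissible constructors and operations on generating functions, and then to observe that for regular specifications this system is \emph{linear} in the unknown generating functions, so that it can be solved by linear algebra over the field of rational functions.

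First I would recall the symbolic transfer rules: if $\C = \A + \B$ then $C(z) = A(z) + B(z)$; if $\C = \A\times\B$ then $C(z) = A(z)B(z)$; the atom $\Z$ has generating function $z$ and the empty class $\EE$ has generating function $1$ (or $0$, depending on whether $\EE$ denotes the neutral class or the empty set — I would fix the convention as in Flajolet--Sedgewick). The sequence construction $\seq(\A)$, valid when $\A$ has no object of size $0$, gives $1/(1-A(z))$. Applying these rules to each equation $\S_i = f_i(\Z,\S_1,\dots,\S_r)$ of the specification produces a system $S_i(z) = \phi_i(z, S_1(z),\dots,S_r(z))$ for $i=1,\dots,r$, with $C(z) = S_1(z)$.

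The key step is then to analyse the \emph{shape} of the $\phi_i$ under the regularity hypothesis. In a regular specification the only operations are $+$, $\times$ and $\seq(\cdot)$ acting on atoms (and the neutral class), but crucially no product of two of the recursively-defined classes $\S_j$ may occur — otherwise the specification would not be regular in the sense used here (this is exactly the restriction that keeps the translated system affine). I would argue, by induction on the structure of the constructor term $f_i$, that each $\phi_i$ is an affine-linear function of $S_1(z),\dots,S_r(z)$ whose coefficients are rational functions of $z$: sums preserve affine-linearity; a product in a regular specification has at most one factor drawn from $\{S_1,\dots,S_r\}$ and the other factors are polynomials (or $\seq$-terms, i.e.\ $1/(1-p(z))$ for a polynomial $p$) in $z$, hence rational; and $\seq$ may only be applied to subterms built from atoms, contributing further rational functions of $z$. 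Thus the translated system can be written in matrix form $(I - M(z))\,\mathbf{S}(z) = \mathbf{v}(z)$, where $M(z)$ is an $r\times r$ matrix and $\mathbf{v}(z)$ a vector, both with entries in $\mathbb{Q}(z)$.

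Finally, since the specification has a (unique) solution as a formal power series, the matrix $I - M(z)$ is invertible over $\mathbb{Q}(z)$ (its determinant is a nonzero rational function, as it evaluates appropriately at $z=0$ given that the system is well-founded / the constructors are proper), so Cramer's rule gives $S_i(z) \in \mathbb{Q}(z)$ for each $i$; in particular $C(z) = S_1(z)$ is rational. The main obstacle is purely bookkeeping: making precise the inductive claim that every regular constructor term translates to an affine-linear expression with rational-function coefficients, and in particular pinning down the admissibility conditions (no size-zero objects inside $\seq$, at most one non-atomic factor in each product) that guarantee regularity forbids any nonlinearity in the $\S_j$. Since this is a standard result (cited as Proposition~I.2 of Flajolet--Sedgewick), I would keep the argument brief and refer the reader there for the detailed transfer theorems.
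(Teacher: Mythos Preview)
The paper does not prove this proposition at all; it simply cites Flajolet--Sedgewick. So there is no ``paper's own proof'' to compare against, and your task reduces to giving a short self-contained justification.

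Your sketch, however, is built on a misreading of the paper's definition of \emph{regular}. You assume that a regular specification allows the classes $\S_1,\dots,\S_r$ to appear on the right-hand sides, subject to a linearity restriction (``at most one factor drawn from $\{S_1,\dots,S_r\}$ in any product''), and you then solve an affine system over $\mathbb{Q}(z)$ via Cramer's rule. But that linearity restriction is your own invention: in this paper, a specification is regular when ``the constructors involve only atoms and the three admissible operations $+$, $\times$ and $\seq(\cdot)$''. That is, the right-hand side of each equation is an expression in the atoms $\Z$ (and variants) and $\EE$ alone, with no occurrence of any $\S_j$. This reading is confirmed explicitly in the proof of Proposition~\ref{prop:CMratcombspec}, which states that ``the right hand side of any equation in $\rS$ involves only atoms, combined using the sum, Cartesian product and sequence operations'', and by the example $\C = \ZLR + \ZL\seq(\Z)\ZR$ in Section~\ref{sec-examples}.

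With the correct definition the argument is much simpler than yours: a direct structural induction on the constructor term shows its generating function lies in $\mathbb{Q}(z)$, since $z,1\in\mathbb{Q}(z)$ and $\mathbb{Q}(z)$ is closed under $f+g$, $fg$, and $f\mapsto 1/(1-f)$. No linear system, no Cramer's rule, no well-foundedness discussion is needed. Your approach is not wrong --- it would establish rationality for the strictly larger family of right-linear systems --- but it answers a question the paper does not ask, and the extra machinery (invertibility of $I-M(z)$, properness of the system) introduces side conditions that are irrelevant here.
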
 

\begin{theorem}[Chomsky-Schutzenberger~\cite{chomsky:the-algebraic-t:}]
\label{thm:chomsky}
A combinatorial class $\C$ that has a context-free specification admits an algebraic (ordinary) generating function $C(z)$. In other words, there exists a (non-null) bivariate polynomial $P(z,y) \in \mathbb{C}[z,y]$ such that $P(z,C(z)) = 0$.
\end{theorem}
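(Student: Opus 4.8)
The plan is to combine the symbolic method with classical elimination theory. First I would use the admissibility of the constructors $+$ and $\times$: applying the generating-function translation termwise to the context-free specification $\S_i = f_i(\V)$ turns it into a system of polynomial equations
\[
  S_i(z) \;=\; \Phi_i\bigl(z,\, S_1(z),\dots,S_r(z)\bigr), \qquad i = 1,\dots,r,
\]
where each $\Phi_i\in\mathbb{Z}_{\ge 0}[z,y_1,\dots,y_r]$ has nonnegative integer coefficients --- the atom $\Z$ supplies the variable $z$, a sum $+$ supplies an addition, and a (noncommutative) Cartesian product supplies a multiplication --- and $C(z)=S_1(z)$. The properness conditions that a legitimate combinatorial specification must satisfy guarantee that this system has a unique solution in formal power series, obtained as the $z$-adic limit of the iteration $S_i^{(0)}=0$, $S_i^{(m+1)}=\Phi_i(z,S_1^{(m)},\dots,S_r^{(m)})$; in particular $C(z)$ is a well-defined power series whose coefficients count the objects of $\C$ by size. (This is precisely the step that the original grammar-theoretic argument of Chomsky and Sch\"utzenberger carries out by passing to the commutative image of an unambiguous context-free grammar.)

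Next I would eliminate the auxiliary unknowns $S_2,\dots,S_r$. Work in $\mathbb{Q}[z,y_1,\dots,y_r]$ with the $r$ polynomials $F_i=y_i-\Phi_i(z,\mathbf{y})$, all of which vanish at $\bigl(S_1(z),\dots,S_r(z)\bigr)$ viewed as a point over the field $\mathbb{C}((z))$. Iterated resultants --- or, more robustly, a generator of the elimination ideal $\langle F_1,\dots,F_r\rangle\cap\mathbb{Q}[z,y_1]$ read off from a Gr\"obner basis --- then produce a polynomial $P(z,y)\in\mathbb{Q}[z,y]$ with $P\bigl(z,C(z)\bigr)=0$. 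Equivalently and more cheaply one can bypass resultants: differentiating the relations $F_i(z,S_1,\dots,S_r)=0$ gives $\sum_j(\partial F_i/\partial y_j)\,dS_j=0$, so once the matrix $(\partial F_i/\partial y_j)$ evaluated at the solution is invertible, every $dS_j$ must vanish, $\mathbb{Q}(z)(S_1,\dots,S_r)$ is algebraic over $\mathbb{Q}(z)$, and clearing denominators in the minimal polynomial of $C(z)=S_1(z)$ produces the required non-null $P$.

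The one genuine obstacle is exactly that invertibility, equivalently certifying $P\not\equiv 0$: a tautological pseudo-specification such as $\S_1=\S_1$ gives $F_1\equiv 0$, and then every power series solves the system. Properness is what rules this out. The conditions force the Jacobian $J(z)=\bigl(\partial\Phi_i/\partial y_j\bigr)\bigl(z,S_1(z),\dots,S_r(z)\bigr)$ to have $J(0)$ nilpotent --- its constant terms form a nonnegative integer matrix, which the well-foundedness of the coefficient recursion forces to have spectral radius $0$ --- so $\det\bigl(I-J(z)\bigr)$ is a unit in $\mathbb{C}[[z]]$; since $\partial F_i/\partial y_j=\delta_{ij}-\partial\Phi_i/\partial y_j$, the Jacobian of $\mathbf F=(F_1,\dots,F_r)$ at the solution is precisely $I-J(z)$, hence invertible, which is exactly what the previous paragraph needs. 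Everything else is routine bookkeeping with the symbolic method and standard commutative algebra; an alternative that avoids resultants and differentials altogether is to solve the polynomial system over $\overline{\mathbb{C}(z)}$ by Newton--Puiseux and match the relevant branch to the power-series solution using the uniqueness already established.
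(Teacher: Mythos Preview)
The paper does not prove this theorem: it is quoted as a classical result and attributed to Chomsky and Sch\"utzenberger~\cite{chomsky:the-algebraic-t:} with no accompanying argument, so there is nothing in the paper to compare your attempt against.

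That said, your sketch is essentially the standard modern proof (as found, for example, in Flajolet and Sedgewick~\cite{flajolet:analytic-combin:}), and the outline is sound. The translation of $+$ and $\times$ into polynomial operations is immediate, and the K\"ahler-differential argument you give for algebraicity is correct and rather clean: over $K=\mathbb{C}(z)$, the relations $F_i=0$ force $\sum_j(\partial F_i/\partial y_j)\,dS_j=0$ in $\Omega_{L/K}$ for $L=K(S_1,\dots,S_r)$, and invertibility of the Jacobian over $L$ (equivalently over $\mathbb{C}((z))$, since the determinant already lies in $L$) kills all the $dS_j$, whence $L/K$ is algebraic in characteristic~$0$.

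The one point to flag is that your argument leans on ``properness'' of the specification to obtain both the unique power-series solution and the nilpotency of $J(0)$. The paper's definition of \emph{context-free} only says that the constructors use $+$ and $\times$; it acknowledges in passing that specifications ``could define recurrences that are (for example) tautological, or impossible to solve'' and that one must ``plac[e] additional conditions'' to rule this out, but it never states those conditions explicitly. Your proof therefore supplies a hypothesis (well-foundedness, equivalently nilpotency of the Jacobian at $z=0$) that the paper leaves implicit. This is not a flaw in your reasoning --- the hypothesis is genuinely needed, and without it the theorem as literally stated is false (take $\S_1=\S_1$) --- but it would be worth saying so plainly rather than invoking ``the properness conditions that a legitimate combinatorial specification must satisfy'' as though they had been spelled out.
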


\begin{figure}[t]
  \centering
  \begin{tikzpicture}
    \draw (-0.5,0.5) rectangle (0.5,1.5) node[pos=0.5]{$\DD$};
    \draw (-3,-0.5) rectangle (-2,0.5) node[pos=0.5]{$\C$};
    \draw (-1.5,-1.5) rectangle (-0.5,-0.5) node[pos=0.5]{$\C$};
    \filldraw[black] (-2,-2) circle (2pt);
    \draw (-1.6,-2.2) node {$\Z$};
  \end{tikzpicture}
  \caption{An example of a class which would correspond to the term $\Z\C\C\D$ in a bottom-to-top combinatorial specification.}
  \label{fig:order}
\end{figure}
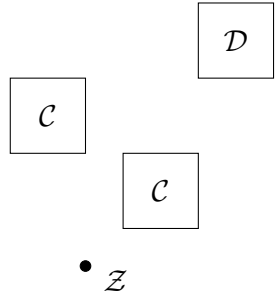

\paragraph{Bottom-to-top specifications}
We say that a combinatorial specification of a permutation class $\C$ is \emph{bottom-to-top} if in the specification, the left-to-right reading order of the atoms and classes in a term corresponds to the order of the entries of the permutations in the class, reading from bottom to top. (Recall that the Cartesian product is noncommutative.) 
For example, the term `$\Z\C\C\D$' in a bottom-to-top specification refers to permutations which, from bottom to top, can be seen as an atom $\Z$, followed by two instances of elements from $\C$, followed by an element from $\D$. A typical example of how this might arise is illustrated in Figure~\ref{fig:order}.

\subsection{Greedy griddings and atoms}
We will be working with operators that act on bottom-to-top combinatorial specifications of a class $\C$, and produce new combinatorial specifications for the class $\C\mid\M$ or $\M\mid\C$ (i.e.\ for $\C$ juxtaposed by a monotone class $\M$ to its right or left). A challenge in this process is to ensure that each permutation in such a juxtaposition has a unique representation in the resulting combinatorial specification. Our solution will be to greedily add as many entries as possible into the monotone class $\M$. For this to work, the operators need to know whether the bottom-to-top reading of the specification has encountered the rightmost (for $\C\mid\M$) or leftmost (for $\M\mid\C$) entry of $\C$. See Figure~\ref{fig:leftmostgridline} for an illustration.
\begin{figure}
  \centering
  \begin{tikzpicture}[scale=0.2]
\plotperm{9,1,7,2,11,26,16,12,17,23,18,25,24,3,4,5,6,8,10,13,14,15,19,20,21,22}
\node[permpt,fill=black!0] at (24) {};
\draw[black, thick] (13.5,0.5) -- (13.5,26.5);
\end{tikzpicture}
\caption{The role of the rightmost point of the class $\C$ in fixing a greedy gridding for $\C\mid\M$. A gridline any further left would produce a basis element of $\M$ involving this entry.}
\label{fig:leftmostgridline}
\end{figure}
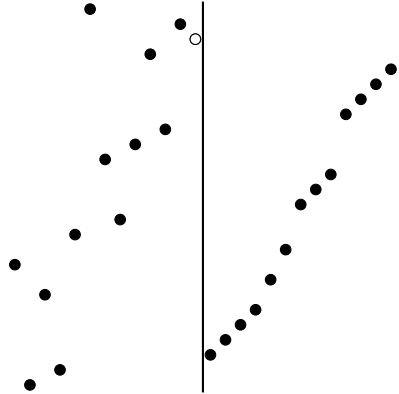

To this end, we will use four different atoms: In addition to the standard atom $\Z$ to represent generic entries of a permutation, we may use $\ZL$ to identify the left-most entry and $\ZR$ the right-most entry. The fourth atom, $\ZLR$, represents an entry that is simultaneously both leftmost and rightmost, and thus can only occur to represent the singleton permutation.

Clearly, each term in a combinatorial specification can have at most one leftmost atom and one rightmost atom, and it will be helpful to decorate the classes in the specification with $\mathsf{L}$, $\mathsf{R}$ or $\mathsf{LR}$ to indicate when these atoms are present or not present. For example, $\CL$ represents a class in which the leftmost atom $\ZL$ is used exactly once in each permutation (but $\ZR$ is not used).

We say that a class $\C$ admits a combinatorial specification that \emph{tracks the rightmost entry} if there exists a bottom-to-top specification for $\CR$. Note that in moving from combinatorial specifications to generating functions, the four atoms all make the same contribution $z$. Thus Proposition~\ref{prop:regular} and Theorem~\ref{thm:chomsky} remain valid, and we can still use the terms `regular' and `context-free' to refer to specifications involving all the different atoms.

\section{Operators and the proof of Theorem~\ref{thm:main}}\label{sec-operators}

To describe the process by which we transform a combinatorial specification for $\C$ into one for a juxtaposition of $\C$ with a monotone class $\M$, we begin by considering only the juxtaposition $\C\mid\Av(21)$. We will later describe symmetry operators that allow us to obtain the other possible juxtapositions.

We can think of the juxtaposition of $\C$ with $\Av(21)$ as the entries of $\C$, with sequences of monotone entries inserted vertically in-between (but horizontally to the right). Let $x$ be any entry of a permutation $\pi\in\C$. The \emph{gap} associated with $x$ is the vertical space immediately below $x$ and above the entry immediately below (if it exists) -- see Figure~\ref{fig:xregion}. In addition, the \emph{top gap} is the vertical space above the highest point of $\C$.

\begin{figure}[ht]
  \centering
  \begin{tikzpicture}
  \node[permpt] at (1,2) {};
  \node[permpt] at (2,4) {};
  \node[permpt] at (3,1) {};
  \node[permpt,label=above left:$x$] at (4,3) {};
    
    \draw[draw=none, thick, pattern=north west lines] (4.5,2) rectangle (6.5,3);
    \draw[black, thick] (4.5,0) -- (4.5,4.5); 
    \draw[thin] (2.2,4)--(6.5,4);
    \draw[thin] (4.2,3)--(6.5,3);
    \draw[thin] (1.2,2)--(6.5,2);
    \draw[thin] (3.2,1)--(6.5,1);
  \end{tikzpicture}
  \caption{The shaded region on the RHS corresponds to the gap associated with $x$.}
  \label{fig:xregion}
\end{figure}
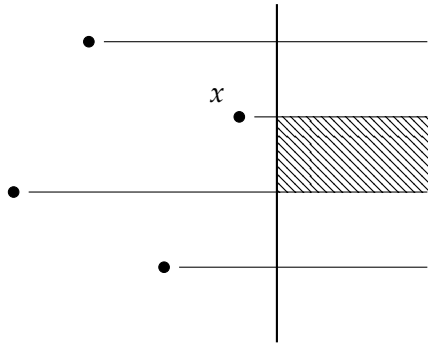

\begin{lemma}\label{lem:zr}
Let $\C$ be a permutation class with a combinatorial specification that tracks the rightmost entry. Then the juxtaposition $\C\mid\Av(21)$ is combinatorially isomorphic to the class $\C$ with (possibly empty) sequences of atoms inserted in the gaps, such that there is at least one nonempty gap no higher than the atom $\ZR$ of $\C$.
\end{lemma}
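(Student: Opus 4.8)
The plan is to set up a bijection between $\C\mid\Av(21)$ and the claimed structure (copies of $\C$ with monotone-increasing sequences slotted into the gaps, subject to the ``low nonempty gap'' constraint), and to verify it is size-preserving. First I would unpack the geometry: a permutation $\pi\in\C\mid\Av(21)$ splits as $\pi = \alpha\beta$ where $\alpha$ is order-isomorphic to some $\sigma\in\C$ and $\beta$ is increasing. The entries of $\beta$ lie strictly to the right of all entries of $\alpha$, but their \emph{values} may be interleaved arbitrarily with those of $\alpha$. So, reading by value from bottom to top, $\beta$ contributes a (possibly empty) block of entries between each consecutive pair of $\alpha$-entries, plus possibly a block below everything and a block above everything. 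Identifying $\alpha$ with its pattern $\sigma\in\C$, this is exactly ``$\C$ with sequences of atoms inserted into the gaps,'' where I extend the gap terminology to include a bottom gap (below the lowest point) — but note that inserting into the bottom gap is the same as inserting into the gap of the lowest entry in the sense used here, or we simply absorb it; I would state precisely which gaps are available (one per entry, namely the space immediately below it, plus the top gap), matching Figure~\ref{fig:xregion}.

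The substantive issue — and the reason the lemma is phrased with the ``at least one nonempty gap no higher than $\ZR$'' clause — is \textbf{uniqueness of representation}. The naive map ``$\pi\mapsto$ (choose a valid split point $i$, record $\sigma$ and the interleaving)'' is not well-defined because a single $\pi$ can often be split in several ways: if the increasing suffix $\beta$ happens to extend the pattern of $\C$ on the left, one could move the split point left or right. To pin down a canonical representative I would use the \emph{greedy} convention advertised in Section~\ref{sec-prelim}: push as many entries as possible into the $\Av(21)$ part, i.e.\ take $i$ minimal. Equivalently, among all ways of writing $\pi$ as ``a copy of some $\sigma\in\C$ with increasing sequences inserted into its gaps,'' we keep only the one where the underlying $\C$-permutation is as short as possible. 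The key claim to prove is then: a configuration (a permutation $\sigma\in\C$ together with a choice of increasing insertions into its gaps) is the greedy representative of its flattening if and only if at least one insertion into a gap at height $\le$ (the value of the rightmost entry of $\sigma$, i.e.\ where $\ZR$ sits) is nonempty. The ``only if'' direction is the content: if every low gap is empty, then the lowest inserted entry sits above the rightmost entry of $\sigma$, so that rightmost entry of $\sigma$ could itself have been peeled off into the increasing suffix (since everything to its right and above it forms an increasing sequence — here is where being the \emph{rightmost} entry of $\C$ matters, and where the gridline picture of Figure~\ref{fig:leftmostgridline} is the right mental model), contradicting minimality. Conversely, if some low gap is nonempty, the rightmost entry of $\sigma$ has an inserted entry below it and to its right, forming a $21$ pattern with it, so it cannot be moved into the $\Av(21)$ part, and one checks no earlier entry of $\sigma$ can be moved either; hence the configuration is greedy.

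So the step I expect to be the main obstacle is exactly this characterisation of greedy configurations via the low-gap condition — in particular, showing that the greedy split is \emph{unique} (not just that a greedy split exists), which amounts to arguing that once the rightmost entry of $\C$ is fixed in place, the remaining freedom is entirely captured by the gap-insertion data with no residual ambiguity. I would handle this by induction on the number of entries moved, or more cleanly by directly describing the inverse map: given a configuration satisfying the low-gap condition, reconstruct $\pi$ by flattening, and then verify that re-running the greedy split on $\pi$ recovers $\sigma$ and the original insertions. The remaining bookkeeping — that the map is size-preserving (total number of entries $=$ $|\sigma|$ plus the number of inserted atoms), that the top gap behaves correctly, and that empty $\sigma$ or empty $\beta$ are handled (the all-empty-insertion case is excluded precisely by the low-gap clause, which forces $\beta\ne\epsilon$ whenever we are genuinely in $\C\mid\Av(21)$ and not just in $\C$) — is routine and I would dispatch it briefly.
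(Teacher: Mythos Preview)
Your proposal is correct and follows essentially the same approach as the paper: interpret the juxtaposition as insertions of increasing runs into the vertical gaps of a permutation from $\C$, then use the greedy gridding (maximise the $\Av(21)$ part) to force a unique representative, and finally characterise greediness by the condition that some inserted entry lies at or below the rightmost entry of the $\C$-part. You are in fact more explicit than the paper about the converse direction (that the low-gap condition prevents the rightmost entry from being absorbed into the increasing suffix), and about why downward-closure of $\C$ lets you peel off the rightmost entry when the condition fails; the paper leaves these implicit. One small point: your closing remark about ``genuinely in $\C\mid\Av(21)$ and not just in $\C$'' is slightly muddled, since $\C\subseteq\C\mid\Av(21)$ and the greedy $\tau$ is always nonempty for nonempty $\pi$; the empty-$\sigma$ case (where there is no $\ZR$ at all) is handled separately in both your outline and the paper, and indeed reappears as the $\EE+\SZ\times\ZR$ terms in the next lemma.
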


\begin{proof}
First, by the comments made before the statement of the lemma it is clear that the juxtaposition $\C\mid\Av(21)$ can be thought of as sequences of monotone entries inserted into the gaps (including the top gap) of a (possibly empty) permutation from $\C$. 

It remains to ensure that we consider each permutation in $\C\mid\Av(21)$ at most once, and for this we use the greedy gridding described in the previous section to include as many entries in the monotone part as possible. Let $\pi\in\C\mid\Av(21)$, and write $\pi = \sigma\tau$ such that  $\sigma\in\C$, and $\tau$ is as large as possible subject to $\tau\in\Av(21)$.  First, if $\sigma$ is empty, then $\pi=\tau\in\Av(21)$ is either empty or an increasing permutation, which can be thought of as the insertion of a sequence into the top gap of the empty permutation $\epsilon$.

Otherwise, both $\sigma$ and $\tau$ are nonempty, and thus the rightmost entry of $\sigma$ must be above at least one entry of $\tau$. Since $\ZR$ tracks the rightmost entry of $\sigma$ in a bottom-to-top specification, the greedy gridding condition translates to the requirement that there is at least one entry of $\tau$ in some gap below $\ZR$, as required.
\end{proof}

To produce the combinatorial specification representing this insertion of sequences of entries into the gaps of permutations from $\C$, we will define operators that are sensitive to the atom $\ZR$ to ensure the conditions of Lemma~\ref{lem:zr} are satisfied. Specifically, we will distinguish between an operator which inserts the lowest entry in $\Av(21)$, and operators that insert (possibly empty) sequences of entries above this. 

Furthermore, the claim made in Theorem~\ref{thm:main} requires us to output a combinatorial specification for $\C\mid\Av(21)$ that tracks the rightmost point. By our choice of greedy gridding this rightmost point must lie in $\Av(21)$, and is therefore created by the process of applying operators. Thus, we will also distinguish between an operator which inserts the rightmost (${}={}$ highest) entry $\ZR$ in some gap, and an operator that acts on gaps below this.

To this end, we define the following linear operators. 
\begin{description}
\item[$\opo$] the null operator: inserts no entries, but replaces $\ZR$ with $\Z$ in its operand (if it appears).
\item[$\opi$] inserts a single point (lowest and highest entry) in some gap of its operand below $\ZR$.
\item[$\opoo$] inserts (possibly empty) increasing sequences of entries in the gaps of its operand.
\item[$\opio$] inserts the lowest entry in some gap of its operand (below the operand atom $\ZR$), followed by (possibly empty) increasing sequences of entries in the subsequent gaps.
\item[$\opoi$] inserts (possibly empty) increasing sequences of entries in the gaps of its operand, finishing with the insertion of a new rightmost atom $\ZR$ in some (non-topmost) gap.
\item[$\opii$] inserts the lowest entry in some gap of its operand (below the operand atom $\ZR$), followed by (possibly empty) increasing sequences of entries in the subsequent gaps, and finishing with the insertion of a new rightmost atom $\ZR$.
\end{description}
For convenience later, let $\bop = \{\opo,\opi,\opoo,\opio,\opoi,\opii\}$ denote the set of operators.

We will defer the description of the action of the operators in $\bop$ until later, except to define their action on the neutral atom $\EE$. The two operators that add potentially empty sequences of points, namely $\opo$ and $\opoo$, act as the identity on $\EE$, whereas the other four annihilate the term entirely; this convention is simply to ensure that later expressions involving combinatorial sums and Cartesian products are consistent.
\begin{align*}
\opo(\EE) &= \opoo(\EE) = \EE\\
\opi(\EE) &= \opio(\EE) = \opoi(\EE) = \opii(\EE) = 0.
\end{align*}

\begin{lemma}\label{lem:CM}
Let $\C$ be a class given by a combinatorial specification which tracks the rightmost entry, and let $\M=\Av(21)$. Then the class $\C\mid\M$ is given by \[\C\mid\M=\EE+\SZ\times\ZR+\opi(\C)+\opii(\C)+\opio(\C)\times \SZ\times\ZR,\]
where $\SZ = \EE + \SZ\times\Z = \seq(\Z)$.

Furthermore, if $\C=\CL$ also tracks the leftmost entry, then the class $\CL\mid\M$ is given by
\[\CL\mid\M=\EE+\ZLR +\ZL\times\SZ\times\ZR+\opi(\CL)+\opii(\CL)+\opio(\CL)\times \SZ\times\ZR.\]
\end{lemma}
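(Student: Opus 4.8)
The plan is to decompose any permutation $\rho \in \C\mid\M$ according to the greedy gridding of Lemma~\ref{lem:zr}, and to match each case to exactly one summand on the right-hand side. Write $\rho = \sigma\tau$ with $\sigma\in\C$ maximal-to-the-left and $\tau\in\Av(21)$ as large as possible. By Lemma~\ref{lem:zr}, we may instead think of $\rho$ as $\sigma$ with (possibly empty) increasing sequences inserted into its gaps (including the top gap), subject to: at least one nonempty gap is no higher than the atom $\ZR$ of $\sigma$ whenever $\sigma$ is nonempty. The five summands then correspond to: (a) $\sigma=\epsilon$ and $\tau=\epsilon$, giving $\EE$; (b) $\sigma=\epsilon$ and $\tau$ a nonempty increasing permutation, which is a nonempty sequence of atoms with its top atom designated the new rightmost $\ZR$, giving $\SZ\times\ZR$; (c) $\sigma$ nonempty, and the topmost inserted entry of $\rho$ lies in a gap of $\sigma$ strictly below $\ZR$ — here $\opi(\C)$ covers the subcase where that insertion is a single point which is simultaneously lowest and highest (so $\ZR$ of $\sigma$ is retained, but wait — we must be careful), and more precisely the operators handle the full bookkeeping; (d) $\sigma$ nonempty and the new global rightmost entry of $\rho$ is an inserted entry sitting above $\ZR$ of $\sigma$ but with all of it confined to gaps of $\sigma$, giving $\opii(\C)$; (e) $\sigma$ nonempty and the new rightmost entry lies strictly above every entry of $\sigma$, i.e.\ in the top gap, preceded by the required low insertion somewhere below $\sigma$'s old $\ZR$, giving $\opio(\C)\times\SZ\times\ZR$. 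I would carefully re-read the operator definitions to confirm exactly which operator discharges the "topmost inserted point is below $\sigma$'s $\ZR$" case versus the "equals/above" cases, since $\opi$, $\opii$ and $\opio$ partition precisely on where the new rightmost atom ends up relative to the old one and the top gap.

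Concretely, I would argue: since the greedy gridding forces the rightmost entry of $\rho$ into $\M$ (as noted before Lemma~\ref{lem:CM}), when $\sigma\neq\epsilon$ the new $\ZR$ is always an inserted atom, and it lies either (i) in a gap of $\sigma$ that is $\le$ the old $\ZR$ — but then by Lemma~\ref{lem:zr} the low-insertion requirement is automatically met by this very atom, and the remaining inserted entries are increasing sequences in the other gaps; this is exactly $\opi(\C)$ if the atom is the only insertion there or $\opii(\C)$ according to the precise convention, or (ii) strictly above the top of $\sigma$, i.e.\ in the top gap — then Lemma~\ref{lem:zr} demands a separate nonempty insertion below $\sigma$'s $\ZR$, which is precisely what $\opio$ provides (it inserts the lowest entry below the operand $\ZR$ and then sequences above), after which we append $\SZ\times\ZR$ to build the increasing tail in the top gap terminating in the new $\ZR$. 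For the $\CL$ statement, I would overlay this with the leftmost-atom bookkeeping: the leftmost entry of $\rho$ is the leftmost entry of $\sigma$ (insertions go to the right of $\sigma$), so $\ZL$ survives untouched through all operators — hence $\opi(\CL)$, $\opii(\CL)$, $\opio(\CL)\times\SZ\times\ZR$ are the analogues of the last three summands. The first three summands $\EE + \ZLR + \ZL\times\SZ\times\ZR$ replace $\EE + \SZ\times\ZR$: $\EE$ is $\sigma=\tau=\epsilon$; $\ZLR$ is the singleton (which is both leftmost and rightmost); and $\ZL\times\SZ\times\ZR$ is $\sigma=\epsilon$ with $\tau$ an increasing permutation of length $\ge 2$, whose bottom entry is $\ZL$ and top entry is $\ZR$.

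The one genuine subtlety — and I expect this to be the main obstacle to write cleanly — is proving that the decomposition is a \emph{bijection}, i.e.\ that the summands are disjoint and jointly exhaustive, and in particular that no permutation is double-counted between $\opi(\C)$, $\opii(\C)$ and the $\opio(\C)\times\SZ\times\ZR$ term. This rests on two facts: first, the greedy gridding makes $\sigma$ unique given $\rho$ (as established in the proof of Lemma~\ref{lem:zr}); second, given $\sigma$, the position of the new rightmost atom of $\rho$ relative to $\sigma$'s gaps and top is determined by $\rho$, so exactly one of the three operator-terms applies, and within that term the distribution of the remaining entries among the gaps is recovered uniquely because the insertions into distinct gaps are separated by entries of $\sigma$. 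I would also need to check the boundary behaviour matches the operators' action on $\EE$ (so that e.g.\ $\opio(\C)$ does not spuriously contribute when $\C$ has an empty term), which is exactly why the convention $\opi(\EE)=\opio(\EE)=\opii(\EE)=0$ was set up. With these two uniqueness facts in hand, the identity follows by checking the two directions (every $\rho$ lands in some summand; every object built by a summand is a valid, distinct element of $\C\mid\M$), and the $\CL$ version is then a routine refinement tracking the untouched $\ZL$ atom.
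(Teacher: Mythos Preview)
Your overall strategy matches the paper's: invoke Lemma~\ref{lem:zr} to fix the greedy gridding, case-split on $\sigma$ and the inserted entries, and match each case to a summand. The handling of $\sigma=\epsilon$ and of the $\CL$ refinement is correct.

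The concrete gap is in your case analysis when $\sigma\neq\epsilon$. You try to partition on where the \emph{new} rightmost atom sits relative to the \emph{old} $\ZR$ of $\sigma$: in your second paragraph, case~(i) is ``new $\ZR$ in a gap $\le$ old $\ZR$'' and case~(ii) is ``new $\ZR$ in the top gap''. This dichotomy is not exhaustive: it omits the situation where the highest inserted entry lies in a gap strictly above the old $\ZR$ but below the topmost entry of $\sigma$. Relatedly, your reading of $\opii$ is off in both paragraphs --- in~(d) you require the new $\ZR$ to sit above the old $\ZR$, and in~(i) you effectively restrict it to sit at or below; neither matches the definition. The operator $\opii$ only constrains the \emph{lowest} inserted entry to lie below the old $\ZR$; the new $\ZR$ (the highest inserted entry) may land in any non-top gap, above or below the old $\ZR$.

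The paper's split is different and is exactly what the operators encode. Among permutations with $\sigma\neq\epsilon$: the term $\opi(\C)$ captures those with a \emph{single} inserted entry (necessarily below the old $\ZR$ by greediness); $\opii(\C)$ captures those with at least two inserted entries, \emph{none} in the top gap; and $\opio(\C)\times\SZ\times\ZR$ captures those with at least two inserted entries where at least one (hence the highest, which becomes the new $\ZR$) lies in the top gap. With this partition, the disjointness and exhaustiveness you worry about in your final paragraph are immediate, since the three cases are distinguished purely by the number of inserted entries and whether the top gap is used.
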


\begin{proof}
Let $\pi\in \C\mid\M$ be nonempty, and take the leftmost gridding $\pi=\pi_\C\pi_\M$. If $\pi_\C$ is empty, then $\pi$ is a monotone increasing sequence of points, and this is captured by the term $\SZ\times\ZR$, so we now assume that $\pi_\C$ is non-empty. Observe that, by the greedy gridding, there is at least one entry in $\pi_\M$. If there is precisely one, then there must be at least one entry of $\pi_\C$ above it, and this is captured by the term $\opi(\C)$.

We may now suppose that there are at least two entries in $\pi_\M$. If all the entries in $\M$ lie in gaps associated with entries from $\pi_\C$, then $\pi$ lies in $\opii(\C)$. 

On the other hand, if one or more entries lie in the top gap, then $\ZR$ lies in this gap and is the topmost entry, preceded by a (possibly empty) sequence of atoms. Below the top gap, we must insert the lowest entry of $\pi_\M$ (which must be below the rightmost entry of $\pi_\C$), followed by a (possibly empty) sequence, interleaved with the entries of $\pi_\C$. This accounts for the final term.

The case where $\C=\CL$ also tracks the leftmost entry follows similarly; indeed, the only difference is in handling the case where the portion $\pi_\C$ of $\pi$ is empty.
\end{proof}

Thus in order to describe the class $\C\mid\Av(21)$, we need to understand terms such as $\opi(\C)$, $\opii(\C)$ and $\opio(\C)$. To do this, we can use the given combinatorial specification for the class $\C$. In other words, we will be applying our operators to equations, each of which is built from a combination of $+$, $\times$ and $\seq(\cdot)$ of other combinatorial classes (which have their own specifications) and atoms.  

We need to document precisely the effect these six operators have on their operands; this is carried out in the next three subsections. As the operators act linearly, and having described the effect on the neutral atom $\EE$ earlier, we have three main tasks: to describe (1) the action on the four atoms, (2) the action over Cartesian products, and (3) the action over the sequence constructor.

\subsection{The action on the four atoms}\label{subsec:atoms}
The complete list of actions is given below, followed by a short narrative to explain the terms. Recall that $\SZ = \EE+\SZ\Z = \seq(\Z)$ represents sequences of the atom $\Z$. 
\[
\begin{array}{c|c|c|c|c|c|c}
\text{Operand}&\opo&\opi&\opoo&\opio&\opoi&\opii\\\hline
\Z&\Z&\ZR\Z&\SZ\Z&\Z\SZ\Z&\SZ\ZR\Z&\Z\SZ\ZR\Z\\
\ZR&\Z&\ZR\Z&\SZ\Z&\Z\SZ\Z&\SZ\ZR\Z&\Z\SZ\ZR\Z\\\hline
\ZL&\ZL&\ZR\ZL&\SZ\ZL&\Z\SZ\ZL&\SZ\ZR\ZL&\Z\SZ\ZR\ZL\\
\ZLR&\ZL&\ZR\ZL&\SZ\ZL&\Z\SZ\ZL&\SZ\ZR\ZL&\Z\SZ\ZR\ZL\\
\end{array}
\]

For an operand that is one of the four atoms, the operators $\opoo$, $\opio$, $\opoi$ and $\opii$ all add a (possibly empty) sequence of entries in the gap corresponding to the atom -- this is represented by $\SZ$. Operators that insert a new rightmost entry (namely $\opi$, $\opoi$ and $\opii$) insert the atom $\ZR$ immediately before the final atom $\Z$ (which corresponds to the same entry as the original operand). 

Finally, we have also listed explicitly the action on the operands $\ZL$ and $\ZLR$, although these are broadly analogous to $\Z$ and $\ZR$. Such atoms will still be leftmost after the effect of the operator, but they will cease to be rightmost (even in the case of $\opo$).

\subsection{The action on Cartesian products}\label{subsec:cartesian}

Whereas the operators all act linearly over combinatorial sums ($+$), their action on the Cartesian product of two (or more) classes is dependent upon whether they insert the lowest entry of the new cell or not. We will call those that do not insert the lowest entry ($\opo$, $\opoo$ and $\opoi$),  \emph{$\ZR$-invariant}  while the others are $\ZR$-sensitive. We will describe the action of these two groups separately.

Note also that we have not mentioned explicitly the leftmost atom $\ZL$ (or the class containing it) of the operand. This is because the action on the Cartesian product of two or more classes is the same whether $\ZL$ is present or not. 

\paragraph{$\ZR$-invariant operators}
Operators which do not insert the lowest entry in the new cell act in the same way irrespective of whether the operand contains $\ZR$ or not. There are three such operators, namely $\opo$, $\opoo$ and $\opoi$. Their action over Cartesian products is therefore defined by:
\begin{align*}
\opo(\A\B) &= \opo(\A)\opo(\B)\\
\opoo(\A\B) &= \opoo(\A)\opoo(\B)\\
\opoi(\A\B) &= \opoi(\A)\opo(\B)+\opoo(\A)\opoi(\B)
\end{align*}
for all classes $\A$ and $\B$.\footnote{Note that by the convention for the actions of these operators of the empty class $\EE$, these and later definitions for the Cartesian product are consistent with the trivial products $\A\times \EE$ and $\EE\times\B$.}

\paragraph{$\ZR$-sensitive operators}
The remaining three operators all insert the lowest entry of the cell, and are therefore sensitive to the position of the atom $\ZR$. For the action on the product of two operators, we distinguish two cases: $\A\times\B$ where no term of $\A$ contains $\ZR$, and $\AR\times\B$ where every term of $\AR$ contains exactly one $\ZR$ (or another class $\D^{\mathsf{R}}$ of the same form).
\begin{align*}
\opi(\A\B)	&=\opi(\A)\opo(\B)+\opo(\A)\opi(\B)\\
\opio(\A\B)	&=\opio(\A)\opoo(\B)+\opo(\A)\opio(\B)\\
\opii(\A\B)	&=\opii(\A)\opo(\B)+\opio(\A)\opoi(\B)+\opo(\A)\opii(\B)\\[5pt]
\opi(\AR\B)	&=\opi(\AR)\opo(\B)\\
\opio(\AR\B)&=\opio(\AR)\opoo(\B)\\
\opii(\AR\B)&=\opii(\AR)\opo(\B)+\opio(\AR)\opoi(\B)
\end{align*}
Note, in each case, that the only difference between the action on $\A\B$ and the action on $\AR\B$ is to lose the final term (involving $\opo(\A)$). Such a term would correspond to inserting the lowest entry above the atom $\ZR$, which is of course disallowed by the greedy gridding.

%
%
%
%
\subsection{The action on sequences}
In the case that we are given a regular specification, then we need to understand how the operators in $\bop$ act on the sequence constructor, $\seq$. Note first that the atoms $\ZL$, $\ZR$ and $\ZLR$ can \emph{never} appear inside a $\seq$ operator, since there can only ever be at most one copy of each of these types of atom in a term. Thus, in describing these actions we do not need to consider the $\ZR$-sensitive operators separately.\footnote{Note that it would be perfectly possible to describe the action of operators in $\bop$ on sequence constructors that involve, e.g. $\ZR$.}

Let $\A$ be an atom, class, or an expression involving combinatorial sums, Cartesian products and sequences of the atom $\Z$ and other classes not involving $\ZL$, $\ZR$ or $\ZLR$. It is routine to verify that the following expressions correctly describe the action of operators on the sequence construction.
\begin{align*}
\opo(\seq(\A)) &= \seq(\opo(\A))\\
\opoo(\seq(\A)) &= \seq(\opoo(\A))\\
\opi(\seq(\A)) &= \seq(\opo(\A))\times\opi(\A)\times\seq(\opo(\A))\\
\opio(\seq(\A)) &= \seq(\opo(\A)) \times \opio(\A) \times \seq(\opoo(\A))\\
\opoi(\seq(\A)) &= \seq(\opoo(\A)) \times \opoi(\A) \times \seq(\opo(\A))\\
\opii(\seq(\A)) &= \seq(\opo(\A)) \times \opio(\A) \times \seq(\opoo(\A)) \times \opoi(\A) \times \seq(\opo(\A))\\
&\qquad {}+ \seq(\opo(\A)) \times \opii(\A) \times \seq(\opo(\A))
\end{align*}

%
%
%
%
\subsection{Expansions}
We have now fully described how the operators act on individual terms, across Cartesian products and on the sequence operator. In this subsection we will apply these descriptions to the equations from the combinatorial specification for a class $\C$, in order to produce new equations for the juxtaposition class $\C\mid\Av(21)$. 

Let $\V$ be a collection of combinatorial classes, and let $\Omega\in\bop$ be an operator. Define $\V_{\Omega} = \{ \Omega(\S) : \S\in\V\}$ to be the collection of combinatorial classes under the action of $\Omega$, and $\V_{\bop} = \bigcup_{\Omega\in\bop} V_{\Omega}$. Our aim is to produce a combinatorial specification for $\C\mid \Av(21)$, using the symbols in $\V_{\bop}$ together with the equations in Lemma~\ref{lem:CM}.

Given an equation $\C = f(\V)$ from some combinatorial specification, we have (trivially), $\Omega(\C) = \Omega(f(\V))$. The \emph{expansion} of $\Omega(\C)$ is then the equation obtained by using the linearity of the operators and the expressions for atoms and Cartesian products given above, i.e.\ the equation of the form
\[\Omega(\C) = g(\V_{\bop}\cup\{\SZ\}),\]
for some function $g$. See Figure~\ref{fig:opiiexpansion} for an example. The properties of $g$ that we need are given in the next lemma.

\begin{figure}[!ht]
\newcommand{\drawclasspicture}[2]{
    \draw[dashed] (2,4) -- (5,4);
    \draw[dashed] (2,3) -- (5,3);
    \draw[dashed] (1,2) -- (5,2);
    \draw[dashed] (1,1) -- (5,1);
    \draw[dashed] (3,0) -- (5,0);
    \draw[fill=gray!30] (1,3) rectangle ++(1,1) node[pos=0.5]{$\D$};
    \draw[fill=gray!30] (3,2) rectangle ++(1,1) node[pos=0.5]{$\CR$};
    \draw[fill=gray!30] (0,1) rectangle ++(1,1) node[pos=0.5]{$\B$};
    \draw[fill=gray!30] (2,0) rectangle ++(1,1) node[pos=0.5]{$\A$};
    \draw[-, very thick] (4,-0.5) -- (4,4.5);
    \node[permpt] at (4,2.3) {};
    \node[permpt] (L) at (4.3,#1) {};
    \node[permpt] (T) at (5.0,#2) {};
    \draw[dotted] (L) -- (T);%
    \path[use as bounding box] (-1.05,-1.5) -- (6.05,4.5);
}%
  \centering
\begin{tikzpicture}[scale=0.6]
    \drawclasspicture{0.1}{0.8}
    \draw (2.5,-1.5) node[above]{\footnotesize $\opii(\A)\opo(\B\CR\D)$};
  \end{tikzpicture}%
\begin{tikzpicture}[scale=0.6]
    \drawclasspicture{0.1}{1.8}
    \draw (2.5,-1.5) node[above]{\footnotesize $\opio(\A)\opoi(\B)\opo(\CR\D)$};
\end{tikzpicture}%
\begin{tikzpicture}[scale=0.6]
  \drawclasspicture{0.1}{2.8}
    \draw (2.5,-1.5) node[above]{\footnotesize $\opio(\A)\opoo(\B)\opoi(\CR)\opo(\D)$};
\end{tikzpicture}%
\begin{tikzpicture}[scale=0.6]
    \drawclasspicture{0.1}{3.8}
    \draw (2.5,-1.5) node[above]{\footnotesize $\opio(\A)\opoo(\B\CR)\opoi(\D)$};
\end{tikzpicture}%
\\[10pt]
  \begin{tikzpicture}[scale=0.6]
    \drawclasspicture{1.1}{1.8}
    \draw (2.5,-1.5) node[above]{\footnotesize $\opo(\A)\opii(\B)\opo(\CR\D)$};
  \end{tikzpicture}
  \begin{tikzpicture}[scale=0.6]
    \drawclasspicture{1.1}{2.8}
    \draw (2.5,-1.5) node[above]{\footnotesize $\opo(\A)\opio(\B)\opoi(\CR)\opo(\D)$};
  \end{tikzpicture}
   \begin{tikzpicture}[scale=0.6]
    \drawclasspicture{1.1}{3.8}
    \draw (3,-1.5) node[above]{\footnotesize $\opo(\A)\opio(\B)\opoo(\CR)\opoi(\D)$};
  \end{tikzpicture}
\\[10pt]
  \begin{tikzpicture}[scale=0.6]
    \drawclasspicture{2.1}{2.8}
    \draw (2.5,-1.5) node[above]{\footnotesize $\opo(\A\B)\opii(\CR)\opo(\D)$};
  \end{tikzpicture}
   \begin{tikzpicture}[scale=0.6]
    \drawclasspicture{2.1}{3.8}
    \draw (2.5,-1.5) node[above]{\footnotesize $\opo(\A\B)\opio(\CR)\opoi(\D)$};
  \end{tikzpicture}
\caption{The expansion of $\opii(\C)$ given the equation $\C =\A\B\CR\D$ is given by the combinatorial sum of the 9 terms above.}
\label{fig:opiiexpansion}
\end{figure}

\begin{lemma}\label{lem:expansion}
Let $\C = f(\V)$ be an equation from a combinatorial specification using only the combinatorial sum, Cartesian product and sequence constructors. Then, for every operator $\Omega\in\bop$, the expansion of $\Omega(\C)$ is an equation involving only the combinatorial sum, Cartesian product and sequence constructors, acting only on atoms and classes from $\V_{\bop}\cup\{\SZ\}$.

Furthermore, if $\C=f(\V)$ does not use the sequence constructor, then neither does the expansion of $\Omega(\C)$.
\end{lemma}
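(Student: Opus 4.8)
The plan is to prove both assertions simultaneously by structural induction on the constructor tree of $f$, reading off exactly which of the displayed rules from Sections~\ref{subsec:atoms}--\ref{subsec:cartesian} (and the sequence rules) gets applied at each node, and checking at every node that the output lies in the advertised syntactic fragment.

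First I would set up the base cases. The leaves of the constructor tree are the neutral class $\EE$, the atom $\Z$, and the decorated atoms $\ZL$, $\ZR$, $\ZLR$. For $\EE$, the convention $\opo(\EE)=\opoo(\EE)=\EE$ and $\opi(\EE)=\opio(\EE)=\opoi(\EE)=\opii(\EE)=0$ gives results that are (vacuously, in the $0$ case) built from sum/product/sequence over $\V_{\bop}\cup\{\SZ\}$, and use no sequence constructor. For the four atoms, the table in Section~\ref{subsec:atoms} gives in every cell a Cartesian product of copies of $\Z$ (these are in $\V_{\bop}$, being e.g.\ $\opo(\Z)$), copies of $\ZR$ or $\ZL$ (likewise), and the symbol $\SZ$, which we have explicitly allowed. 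In particular no sequence constructor is introduced — $\SZ$ is a named class, not an application of $\seq$ — so the ``furthermore'' clause holds at the leaves. This is the point where admitting $\SZ$ into the alphabet is essential.

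Next, the inductive step for the two internal constructors that appear in a context-free specification. For a sum $f=g+h$, linearity of every $\Omega\in\bop$ gives $\Omega(g+h)=\Omega(g)+\Omega(h)$, and both summands are, by the induction hypothesis, expansions of the required form; their sum then is too, and it uses $\seq$ only if one of the summands does. For a product $f=gh$, I would split into the $\ZR$-invariant operators ($\opo,\opoo,\opoi$) and the $\ZR$-sensitive ones ($\opi,\opio,\opii$), and further, for the sensitive ones, into the two cases ``no term of the left factor contains $\ZR$'' versus ``the left factor is of the form $\AR$''. In each of the six displayed product formulas, $\Omega(gh)$ is written as a sum of products of terms of the shape $\Omega'(g)$ and $\Omega''(h)$ with $\Omega',\Omega''\in\bop$; by induction each such factor is an expansion over $\V_{\bop}\cup\{\SZ\}$ in the right fragment, so the finite sum-of-products built from them is as well, and it avoids $\seq$ whenever $g$ and $h$ do. (One should note here that the classification of a subexpression as ``$\AR$-shaped'' or ``$\ZR$-free'' is a static property determined by whether $\ZR$/$\ZLR$ occurs, so the recursion is well-defined on the parse tree; I would state this as a preliminary observation.)

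Finally, the inductive step for $\seq$, needed only for the regular case and handled by the six displayed sequence formulas. Each of these rewrites $\Omega(\seq(\A))$ as a Cartesian product (and, for $\opii$, a sum of two such) whose factors are either $\seq(\opo(\A))$, $\seq(\opoo(\A))$ — sequences of expansions of $\A$, which by induction are themselves legitimate expressions in the fragment — or a single middle factor $\Omega'(\A)$ with $\Omega'\in\bop$, again an expansion by induction. Hence $\Omega(\seq(\A))$ stays within sum/product/$\seq$ over $\V_{\bop}\cup\{\SZ\}$. Note this step is only invoked when $f$ does use the sequence constructor, so it does not threaten the ``furthermore'' clause. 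I do not expect a serious obstacle: the lemma is essentially a bookkeeping statement asserting that the rewrite rules already written down are closed under composition and preserve the two syntactic invariants (``alphabet $\subseteq\V_{\bop}\cup\{\SZ\}$'' and ``no $\seq$ if the input has none''). The only place demanding a little care is making precise, before the induction, that the $\ZR$-sensitivity case split in the product rule is driven by a well-defined syntactic attribute of subexpressions, so that the ``it is routine to verify'' claims for the product and sequence formulas can be invoked cleanly.
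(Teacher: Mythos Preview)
Your proposal is correct and follows essentially the same approach as the paper: both argue that the predefined rewrite rules (linearity over $+$, the product formulas, the sequence formulas, and the atom table) suffice to reduce $\Omega(f(\V))$ to an expression in the required fragment. You have made the structural induction on the parse tree of $f$ explicit, whereas the paper's proof is a terse two-paragraph sketch that simply invokes linearity and the rule tables; but the underlying logic is identical.

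One small remark: your parenthetical that $\Z\in\V_{\bop}$ ``being e.g.\ $\opo(\Z)$'' is unnecessary and possibly not quite right (atoms need not belong to $\V$), but it is harmless since the lemma's conclusion allows atoms outright, separately from the classes in $\V_{\bop}\cup\{\SZ\}$.
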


\begin{proof}
First, any operator $\Omega\in\bop$ acts linearly on combinatorial sums
\[\Omega(\A+\B) = \Omega(\A) + \Omega(\B),\]
so it suffices to consider the action of $\Omega$ on individual terms of $f(\V)$. Such terms, however, are themselves composed of Cartesian products of classes in $\V$ or sequences of classes in $\V$, and the rules for any operator in $\bop$ described earlier in this subsection demonstrate that these can be rewritten as expressions involving combinatorial sums, Cartesian products and sequences of classes in $\V_{\bop}$. Note that the action of an operator $\Omega\in\bop$ on an atom is described in Subsection~\ref{subsec:atoms}, and can be expressed as the Cartesian product of atoms and the class $\SZ$. This completes the proof.

Finally, if there are no sequence constructors in the original equation, then the rules given earlier for operators in $\bop$ guarantee that no sequence constructor will be introduced in the expansion of $\Omega(\C)$. 
\end{proof}

The result above tells us that we can create equations for the classes in $\V_{\bop}$ from the equations in the combinatorial specification for $\C$, and that these equations only use our restricted set of admissible operations. From these equations, we can now describe a combinatorial specification for $\C\mid\Av(21)$, which is the first key piece of our main result.

\begin{proposition}\label{prop:CMcombspec}
Let $\C$ be a permutation class, and $\rS$ a combinatorial specification for $\C$ that tracks the rightmost entry. Then there exists a combinatorial specification $\rS'$ for the class $\C\mid\Av(21)$ that tracks the rightmost entry.

Furthermore, if $\rS$ is context-free, and/or tracks the leftmost entry, then so is $\rS'$. 
\end{proposition}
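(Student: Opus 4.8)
The plan is to assemble Proposition~\ref{prop:CMcombspec} from the three ingredients already in place: Lemma~\ref{lem:CM} gives the top-level equation for $\C\mid\Av(21)$ (and its leftmost-tracking variant $\CL\mid\M$) in terms of $\opi(\C)$, $\opii(\C)$, $\opio(\C)$ and $\SZ$; Lemma~\ref{lem:expansion} tells us that for each $\Omega\in\bop$ and each defining equation $\S = f(\V)$ of $\rS$, the expansion of $\Omega(\S)$ is again an equation using only $+$, $\times$, $\seq$ acting on symbols from $\V_{\bop}\cup\{\SZ\}$; and the context-free specification $\SZ = \EE + \SZ\times\Z$ supplies $\SZ$. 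So I would simply take $\rS'$ to consist of: the equation(s) from Lemma~\ref{lem:CM}, the equation $\SZ = \EE + \SZ\times\Z$, and for every $\S\in\V$ and every $\Omega\in\bop$ the expansion equation for $\Omega(\S)$. Since $\rS$ has finitely many equations and $|\bop| = 6$, this is a finite system, and by Lemma~\ref{lem:expansion} every right-hand side is admissible.

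The substantive point to check is that $\rS'$ is genuinely a combinatorial specification for $\C\mid\Av(21)$ — i.e.\ that the symbols on the right-hand sides are all defined within $\rS'$, and that the system is well-founded (non-circular in the sense required of a specification). For the first, I would observe that the right-hand sides of the Lemma~\ref{lem:CM} equation involve only $\SZ$, $\ZR$, $\EE$ and $\opi(\C),\opii(\C),\opio(\C)\in\V_{\bop}$; the right-hand side of each expansion $\Omega(\S) = g(\V_{\bop}\cup\{\SZ\})$ involves only symbols in $\V_{\bop}\cup\{\SZ\}$ by Lemma~\ref{lem:expansion}; and $\SZ$ is defined by its own equation. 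Thus $\rS'$ is closed. For well-foundedness I would argue that the operator expansions do not destroy the termination structure of $\rS$: each atom-level action in Subsection~\ref{subsec:atoms} replaces an atom by a product of atoms and $\SZ$ (the latter self-contained and well-founded), while the Cartesian-product and sequence rules only recombine $\Omega'(\S_i)$ for subclasses $\S_i$ appearing in $f$, with no $\Omega'(\S)$ depending on $\Omega''(\S)$ through a cycle that was not already present in $\rS$. Concretely, if $\rS$ has a size-strictly-decreasing well-ordering of its classes witnessing it is a valid specification, the same ordering lifted by $\Omega(\S_i) \prec \Omega'(\S_j)$ whenever $\S_i \prec \S_j$ in $\rS$ (together with placing $\SZ$ and the Lemma~\ref{lem:CM} class at the top) witnesses it for $\rS'$; the fact that the operators may turn a one-point contribution of $\C$ into several points only makes terms larger, which is harmless for a specification.

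For the "furthermore" clause I would split into the two cases. If $\rS$ is context-free, then no equation of $\rS$ uses $\seq$; by the final sentence of Lemma~\ref{lem:expansion}, no expansion $\Omega(\S)$ uses $\seq$ either; the only remaining ingredients of $\rS'$ are the Lemma~\ref{lem:CM} equation (which uses only $+$, $\times$ and the symbol $\SZ$) and the equation $\SZ = \EE + \SZ\times\Z$ (also $+$, $\times$ only). Hence $\rS'$ uses only $+$ and $\times$, so it is context-free. If $\rS$ tracks the leftmost entry, i.e.\ $\rS$ is a bottom-to-top specification for $\CL$ (or $\CLR$), then every class symbol carries a consistent $\mathsf{L}$-decoration; I would check that the atom table in Subsection~\ref{subsec:atoms} preserves leftmost-ness ($\ZL \mapsto$ something ending in $\ZL$, $\ZLR \mapsto$ something ending in $\ZL$), that the Cartesian-product rules are insensitive to the $\ZL$-decoration as the text already notes, and that the second displayed equation of Lemma~\ref{lem:CM} provides a leftmost-tracking top-level equation for $\CL\mid\M$. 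Therefore $\rS'$ tracks the leftmost entry, and all four decorations contribute $z$ to the generating function as noted after the definition of tracking.

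The main obstacle, and the part deserving the most care, is the well-foundedness/"is really a specification" verification: one must be sure that applying six operators to each equation and cross-referencing the results cannot create a tautological or unsolvable cycle. I expect this to come down to a clean monotonicity observation — the operator expansion rules only ever build $\Omega(\S)$ out of $\Omega'(\text{proper subterms of }f)$ plus $\SZ$ and atoms, never out of $\Omega'(\S)$ for the same $\S$ except through the genuine recursion already present in $f$ — so the dependency digraph of $\rS'$ is a "blow-up by a factor of $6$" of that of $\rS$ (plus the independent node $\SZ$ and the sink node for $\C\mid\Av(21)$), and hence inherits whatever acyclicity/properness condition makes $\rS$ a valid combinatorial specification. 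I would state this as the crux and give the lifted well-ordering explicitly, treating the rest as bookkeeping justified by Lemmas~\ref{lem:CM} and~\ref{lem:expansion}.
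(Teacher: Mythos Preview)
Your proposal is correct and follows essentially the same approach as the paper: define $\rS'$ to be the Lemma~\ref{lem:CM} equation, the equation $\SZ=\EE+\SZ\times\Z$, and the expansions $\Omega(\S)$ for every $\Omega\in\bop$ and $\S\in\V$, then invoke Lemma~\ref{lem:expansion} to conclude closure and context-freeness, and the second equation of Lemma~\ref{lem:CM} for the leftmost-tracking case. The only difference is that you devote substantial effort to well-foundedness (lifting a termination ordering from $\rS$ to $\rS'$), whereas the paper does not address this at all---in the paper's framework, a ``context-free specification'' is defined purely syntactically by the constructors used, so Lemma~\ref{lem:expansion} alone suffices.
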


\begin{proof}
First, let $\V$ denote the set of classes that appear on the left hand side of any equation in the specification $\rS$. Thus, any equation in $\rS$ is of the form $\S = f(\V)$. 

By Lemma~\ref{lem:CM}, the class $\C\mid\Av(21)$ is given by the equation
\[\C\mid\Av(21)=\EE+\SZ\times\ZR+\opi(\C)+\opii(\C)+\opio(\C)\times \SZ\times\ZR.\]
We therefore need to establish a system of equations to describe the classes $\opi(\C)$, $\opii(\C)$ and $\opio(\C)$, which taken together with the equation $\SZ = \EE + \SZ\Z$, forms a complete combinatorial specification. We claim that it suffices to define the system of equations $\rS'$ to comprise the following:
\begin{align*}
\C\mid\Av(21) &=\EE+\SZ\times\ZR+\opi(\C)+\opii(\C)+\opio(\C)\times \SZ\times\ZR\\
\Omega(\S) &= g_{\Omega(\S)}(\V_{\bop}\cup\{\SZ\}) &\text{for every }\Omega\in\bop, \S\in\V\\
\SZ &= \EE+\SZ\times \Z.
\end{align*}

Every class in $\V_{\bop}$ is of the form $\Omega(\S)$ for some $\Omega\in\bop$ and $\S\in\V$. Writing $\S=f(\V)$ (an equation taken from $\rS$), by Lemma~\ref{lem:expansion} the expansion of $\Omega(\S)$ is an equation involving only combinatorial sums and Cartesian products of classes from $\V_{\bop}\cup\{\SZ\}$. Thus the system $\rS'$ above is indeed a context-free combinatorial specification, providing $\rS$ is. Furthermore, each term in the equation for $\C\mid\M$ acts from bottom-to-top, and specifies the position of the rightmost entry $\ZR$.

If the atom $\ZL$ and/or $\ZLR$ is present in the specification for $\C(=\CL)$, then, by definition, the operators in $\bop$ ensure that $\ZL$ and/or $\ZLR$ are correctly tracked for $\CL\mid\M$. However, in this case we need to use the following expression for the class $\CL\mid\M$, given in Lemma~\ref{lem:CM}:
\[\CL\mid\Av(21)=\EE+\ZLR +\ZL\times\SZ\times\ZR+\opi(\CL)+\opii(\CL)+\opio(\CL)\times \SZ\times\ZR.\]
\end{proof}

Analogously, we have the following result that guarantees a regular specification is obtained if the initial specification is regular.

\begin{proposition}\label{prop:CMratcombspec}
Let $\C$ be a permutation class, and $\rS$ a regular combinatorial specification for $\C$ that tracks the rightmost entry. Then there exists a regular combinatorial specification $\rS'$ that tracks the rightmost entry for $\C\mid\Av(21)$.

Furthermore, if $\rS$ tracks the leftmost entry, then so does $\rS'$.
\end{proposition}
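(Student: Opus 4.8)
The plan is to mirror the proof of Proposition~\ref{prop:CMcombspec} almost verbatim, checking at each step that no construction introduces a sequence constructor beyond those already present. First I would recall that, by Lemma~\ref{lem:CM}, the class $\C\mid\Av(21)$ is described by
\[\C\mid\Av(21)=\EE+\SZ\times\ZR+\opi(\C)+\opii(\C)+\opio(\C)\times \SZ\times\ZR,\]
together with the equation $\SZ=\EE+\SZ\times\Z$; since a regular specification may use $\seq(\cdot)$, the class $\SZ$ is legitimately available (indeed $\SZ=\seq(\Z)$), and the terms $\SZ\times\ZR$ and $\opio(\C)\times\SZ\times\ZR$ are perfectly admissible in a regular specification. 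So the top-level equation for $\C\mid\Av(21)$ is fine.

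Next I would build the auxiliary equations. Exactly as before, let $\V$ be the set of classes on the left-hand sides of equations of $\rS$, and for each $\Omega\in\bop$ and each $\S\in\V$ with defining equation $\S=f(\V)$, form the expansion $\Omega(\S)=g_{\Omega(\S)}(\V_{\bop}\cup\{\SZ\})$. The crucial point is the ``regular'' clause of Lemma~\ref{lem:expansion}: there is no such clause — Lemma~\ref{lem:expansion} only guarantees that no \emph{new} sequence constructor is introduced when the original equation has none. Here the original equations \emph{may} use $\seq(\cdot)$, so I would instead invoke the rules in Subsection~2.3 (the action on sequences): each of the six operators sends $\seq(\A)$ to an expression built from $+$, $\times$ and $\seq(\cdot)$ applied to $\opo(\A)$, $\opoo(\A)$, $\opi(\A)$, $\opio(\A)$, $\opoi(\A)$, $\opii(\A)$ — all members of $\V_{\bop}$. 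Combined with the atom table of Subsection~2.1 (which produces only products of atoms with $\SZ$) and the Cartesian-product rules of Subsection~2.2 (which produce only $+$ and $\times$), every expansion $g_{\Omega(\S)}$ uses only $+$, $\times$ and $\seq(\cdot)$ acting on $\V_{\bop}\cup\{\SZ\}$. Hence the whole system
\[
\begin{array}{l}
\C\mid\Av(21)=\EE+\SZ\times\ZR+\opi(\C)+\opii(\C)+\opio(\C)\times\SZ\times\ZR,\\[2pt]
\Omega(\S)=g_{\Omega(\S)}(\V_{\bop}\cup\{\SZ\})\quad\text{for all }\Omega\in\bop,\ \S\in\V,\\[2pt]
\SZ=\EE+\SZ\times\Z
\end{array}
\]
is a regular combinatorial specification. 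That it tracks the rightmost entry, and that it acts bottom-to-top, follows exactly as in Proposition~\ref{prop:CMcombspec}, since the operators were designed to place $\ZR$ correctly; likewise, if $\rS$ tracks the leftmost entry one simply substitutes the second formula of Lemma~\ref{lem:CM} for the top-level equation, and the operator rules (which treat $\ZL$ and $\ZLR$ uniformly with $\Z$ and $\ZR$) ensure $\ZL$/$\ZLR$ remain correctly tracked.

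I expect no genuine obstacle here: the content is entirely a bookkeeping check that the sequence-action formulas of Subsection~2.3 keep us inside the regular fragment. The only point requiring a moment's care is that one cannot cite the ``furthermore'' sentence of Lemma~\ref{lem:expansion} (which is about the sequence-free case); instead one must observe directly that the sequence-action rules never introduce anything worse than $+$, $\times$ and $\seq(\cdot)$, so a regular input yields a regular output. Everything else — bottom-to-top ordering, rightmost-entry tracking, and the leftmost-entry variant — is inherited word-for-word from the proof of Proposition~\ref{prop:CMcombspec}.
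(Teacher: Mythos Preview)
Your approach is the paper's own: mirror Proposition~\ref{prop:CMcombspec}, invoking in addition the sequence-action rules so that every expansion stays within $+,\times,\seq$. That is exactly what the paper does.

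There is, however, one genuine slip in your final assembly. By the paper's definition (Section~\ref{sec-prelim}), a specification is \emph{regular} only when the constructors involve \emph{atoms alone}; no class symbols are permitted on right-hand sides. The system you display is therefore not regular: it contains the recursive equation $\SZ=\EE+\SZ\times\Z$, and the top-level equation refers to the class symbols $\opi(\C),\opii(\C),\opio(\C)$. The paper addresses exactly this: because every right-hand side of a regular $\rS$ is already built solely from atoms, each expansion $\Omega(\S)$ is in fact an expression in atoms (and $\SZ$), not in members of $\V_{\bop}$; one then replaces every $\SZ$ by $\seq(\Z)$, after which the resulting equations genuinely involve only atoms under $+,\times,\seq$. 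You observed $\SZ=\seq(\Z)$ yourself, so the repair is immediate---but as written your claim that the displayed system ``is a regular combinatorial specification'' is false, and you need to make the atom-only substitution explicit. (Minor: the atom, Cartesian-product and sequence rules live in Subsections~3.1--3.3, not~2.1--2.3.)
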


\begin{proof}
We construct the specification $\rS'$ from $\rS$ following the proof of Proposition~\ref{prop:CMcombspec}. We will omit the details as they are similar, the only exception being that we now need to use the defined actions of operators in $\bop$ on sequences in the expansions. Note that $\rS'$ will track the rightmost entry, and if $\rS$ tracks the leftmost entry, then so does $\rS'$.

It remains to verify that $\rS'$ is regular. Note that the right hand side of any equation in $\rS$ involves only atoms, combined using the sum, Cartesian product and sequence operations. As such, by considering the effect of the operators in $\bop$, the expansion of an equation in $\rS$ under some operator in $\bop$ will yield an equation in $\rS'$ also involving only sums, products and sequences of atoms, except for the appearance of the class $\SZ$. This however, can be replaced by $\seq(\Z)$ wherever it appears, whence the equation in $\rS'$ satisfies the conditions to be regular.
\end{proof}

%
%
%
%
%
\subsection{Symmetry operators}

Having established how to find a combinatorial specification for $\C\mid\Av(21)$ given one for $\C$, we now need to describe how to find specifications for $\C\mid\Av(12)$, $\Av(21)\mid\C$ and $\Av(12)\mid\C$. 

Let $\pi$ be a permutation of length $n$. The \emph{reverse} of $\pi$, denoted $\pi^r$, is the permutation obtained by reading the entries of $\pi$ in reverse order, i.e. $\pi^r(i) =\pi(n+1-i)$. Similarly, the \emph{complement} of $\pi$ is $\pi^c$, defined by $\pi^c(i) = n+1-\pi(i)$. Combining these two operations, we obtain the \emph{reverse-complement}, $\pi^{rc}$, which is formally defined by $\pi^{rc}(i) = n+1 -\pi(n+1-i)$.

By extension, we may consider the reverse, complement, and reverse-complement of a class $\C$ of permutations, for example $\C^r = \{\pi^r : \pi\in\C\}$. 

\begin{observation}\label{obs:symmetry}
For any permutation class $\C$, we have the following:
\begin{align*}
\C\mid\Av(12) &= (\C^c\mid\Av(21))^c\\
\Av(12)\mid\C &= (\C^r\mid\Av(21))^r, \text{ and}\\
\Av(21)\mid\C &= (\C^{rc}\mid\Av(21))^{rc}.\end{align*}
\end{observation}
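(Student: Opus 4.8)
The plan is to prove each of the three identities by showing that the relevant symmetry of permutations (reverse, complement, or reverse-complement) carries one juxtaposition class onto another, and is an involution, so that applying it twice returns the original class. The key fact I would use is that these three maps are bijections on the set of all permutations, they preserve length, they are involutions, and — crucially — they interact predictably with the ``left part / right part'' decomposition defining a juxtaposition.

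First I would record the basic interplay: if $\pi=\alpha\beta$ is a concatenation of two (pattern-representatives of) sequences, then $\pi^c = \alpha^c\beta^c$ (complement acts positionally, so it does not swap the two blocks), $\pi^r = \beta^r\alpha^r$ (reverse reverses the block order), and $\pi^{rc}=\beta^{rc}\alpha^{rc}$. Combined with the elementary facts that $\Av(21)^c=\Av(12)$, $\Av(21)^r=\Av(12)$, and $\Av(21)^{rc}=\Av(21)$ (an increasing sequence complements and reverses to a decreasing one, but its reverse-complement is again increasing), the three identities follow. For instance, for the first: $\pi\in\C\mid\Av(12)$ iff $\pi=\sigma\tau$ with $\sigma\in\C\cup\{\epsilon\}$ and $\tau\in\Av(12)\cup\{\epsilon\}$ iff (applying $c$ blockwise) $\pi^c=\sigma^c\tau^c$ with $\sigma^c\in\C^c\cup\{\epsilon\}$ and $\tau^c\in\Av(12)^c\cup\{\epsilon\}=\Av(21)\cup\{\epsilon\}$ iff $\pi^c\in\C^c\mid\Av(21)$; since $c$ is an involution, $\pi\in(\C^c\mid\Av(21))^c$. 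The second and third identities are the same argument with $r$ and $rc$ in place of $c$, using that $r$ and $rc$ reverse the block order — which is exactly why $\Av(12)$ ends up on the left in those two lines — and that each map is an involution.

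There is essentially no main obstacle here; the statement is a routine bookkeeping check. The only point requiring a moment's care is the block-order reversal under $r$ and $rc$: one must verify that if $\pi=\sigma\tau$ as a juxtaposition then $\pi^r$ genuinely splits as $\tau^r\sigma^r$ with the split index at $|\tau|$, which is immediate from $\pi^r(i)=\pi(n+1-i)$, and likewise that the empty-permutation edge cases ($\sigma$ or $\tau$ empty) are preserved since all three maps fix $\epsilon$. I would present this as a short unified argument: prove the generic statement ``if $\phi$ is a length-preserving involution on permutations with $\phi(\alpha\beta)=\phi(\alpha)\phi(\beta)$ then $(\X\mid\Y)^\phi = \Y^\phi\mid\X^\phi$ when $\phi$ reverses blocks, and $=\X^\phi\mid\Y^\phi$ when it preserves them,'' then instantiate with $\phi\in\{c,r,rc\}$ and the three facts about $\Av(21)$ and $\Av(12)$ above.
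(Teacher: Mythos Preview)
Your argument is correct and is exactly the routine verification one would expect; the paper itself states this as an Observation without proof, so there is nothing further to compare. One small wording slip in your generic template: in the block-reversing case you want $\phi(\alpha\beta)=\phi(\beta)\phi(\alpha)$, not $\phi(\alpha)\phi(\beta)$, but your detailed per-case arguments already have this right.
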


Given a bottom-to-top combinatorial specification for $\C$, we can obtain a bottom-to-top specification for $\C^c$ simply by reversing the order in which terms in a Cartesian product are taken. This is characterised by the \emph{complement} operator, $\Theta$, which acts linearly on combinatorial sums, as the identity on atoms (i.e.\ $\Theta(\Z)=\Z$, $\Theta(\ZR)=\ZR$, $\Theta(\ZL)=\ZL$, $\Theta(\ZLR)=\ZLR$), and is then defined recursively on Cartesian products as follows:
\[\Theta( \A\times \B) =  \Theta(\B)\times\Theta(\A)\]
for classes $\A$ and $\B$. The action on the sequence operator is given by
\[\Theta(\seq(\A)) = \seq(\Theta(\A)).\]

Similarly, given a specification for $\C$ that tracks both the leftmost and rightmost entries, we can find one for $\C^r$ by preserving the order in the Cartesian product, but switching the atoms $\ZR$ and $\ZL$. This is characterised by the \emph{reverse} operator, $\Phi$, which acts linearly on combinatorial sums, distributively on Cartesian products ($\Phi(\A\times\B)=\Phi(\A)\times\Phi(\B)$) and sequences ($\Phi(\seq(\A)) = \seq(\Phi(\A))$), and satisfies
\[\Phi(\Z)=\Z,\quad \Phi(\ZR)=\ZL,\quad \Phi(\ZL)=\ZR,\quad \Phi(\ZLR)=\ZLR.\]
Note that if a class $\C$ tracks only the leftmost entry, then $\Phi(\C)$ tracks only the rightmost entry and vice versa, and if $\C$ tracks both, then so does $\Phi(\C)$.

Given some equation $\C = f(\V)$ from a combinatorial specification, the \emph{expansion} of $\Theta(\C)$ is the equation obtained by using the linearity of $\Theta$, together with the defined action on Cartesian products, to obtain an equation of the form
\[\Theta(\C) = g(\V_{\Theta})\]
for some $g$. Similarly, the expansion of $\Phi(\C)$ is the equation obtained following the definition of $\Phi$ above, $\Phi(\C)=h(\V_{\Phi})$ for some $h$.

We now have the following observation, mirroring Lemma~\ref{lem:expansion}. We omit its proof.

\begin{lemma}
Let $\C = f(\V)$ be an equation from a combinatorial specification using only the combinatorial sum, Cartesian product, and sequence constructors. Then the expansion of $\Theta(\C)$ and $\Phi(\C)$ are equations involving only the combinatorial sum, Cartesian product and sequence operators, acting only on classes from $\V_\Theta$ and $\V_\Phi$, respectively.

Furthermore, if $\C=f(\V)$ does not contain any instance of a sequence operator, then neither does the expansion of $\Theta(\C)$.
\end{lemma}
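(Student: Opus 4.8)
The plan is to prove this by structural induction on the form of the equation $\C = f(\V)$, exactly mirroring the proof of Lemma~\ref{lem:expansion}. Since both $\Theta$ and $\Phi$ are defined to act linearly on combinatorial sums, it suffices to verify the claim for a single term, which is built up from atoms and classes in $\V$ using Cartesian products and sequences. The key point is that, unlike the operators in $\bop$, neither $\Theta$ nor $\Phi$ introduces any new atoms or instances of $\SZ$: the action on each of the four atoms $\Z$, $\ZR$, $\ZL$, $\ZLR$ is simply to return that same atom (or the atom with $\mathsf{L}$ and $\mathsf{R}$ swapped, in the case of $\Phi$), so in particular no sum, product or sequence is created out of an atom.

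The inductive steps are then immediate from the defining equations. For a Cartesian product $\A\times\B$, we have $\Theta(\A\times\B)=\Theta(\B)\times\Theta(\A)$ and $\Phi(\A\times\B)=\Phi(\A)\times\Phi(\B)$; in both cases the right-hand side is a Cartesian product of expressions which, by the inductive hypothesis, are themselves built only from sums, products and sequences of classes in $\V_\Theta$ (respectively $\V_\Phi$). For a sequence $\seq(\A)$, both operators satisfy $\Theta(\seq(\A))=\seq(\Theta(\A))$ and $\Phi(\seq(\A))=\seq(\Phi(\A))$, so again the resulting expression is of the required shape, and a sequence constructor appears on the right only if one appeared in the operand. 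This last observation gives the final sentence of the lemma for $\Theta$: if $f(\V)$ contains no sequence operator, then no sequence operator is introduced by expanding $\Theta(\C)$, since the only rule that produces a $\seq$ on the right-hand side is the one whose left-hand side is already a $\seq$.

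I do not anticipate any genuine obstacle here — this is precisely why the authors write ``We omit its proof.'' The only point requiring a moment's care is bookkeeping of the atoms: one must check that $\Theta$ and $\Phi$ genuinely map the decorated atoms to decorated atoms (not to compound expressions), so that the base case of the induction lands inside the allowed vocabulary $\V_\Theta\cup\{\text{atoms}\}$ rather than forcing, say, an $\SZ$ to appear. Since the definitions stipulate $\Theta(\ZR)=\ZR$, $\Phi(\ZR)=\ZL$, and so on, this is transparent. A secondary point worth a sentence is that the noncommutativity of the Cartesian product is respected: $\Theta$ reverses the order of factors, which is exactly what is needed for a bottom-to-top specification of $\C^c$, and reversing a valid term yields a valid term. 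With these remarks the induction closes and the lemma follows.
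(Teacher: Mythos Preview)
Your proposal is correct and matches the paper's intent exactly: the authors state that the lemma mirrors Lemma~\ref{lem:expansion} and omit the proof, and your structural-induction argument is precisely that mirror. The observations you single out (that $\Theta$ and $\Phi$ send atoms to atoms rather than to compound expressions, and that neither rule introduces a $\seq$ unless one was already present) are the only points requiring comment, and you have handled them.
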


Similarly, we may combine these expansions to obtain combinatorial specifications for $\C^c$, $\C^r$ and $\C^{rc}$. As its proof resembles (but is simpler than) the proof of Proposition~\ref{prop:CMcombspec}, we will omit it.

\begin{proposition}\label{prop:TPcombspec}
Let $\rS$ be a combinatorial specification for a class $\C$. Then there exist combinatorial specifications $\rS^\Theta$, $\rS^\Phi$ and $\rS^{\Theta\cdot\Phi}$  for the classes $\C^c$, $\C^r$ and $\C^{rc}$, respectively.

Furthermore, if $\rS$ tracks the rightmost (resp. leftmost) entry, then 
so does $\rS^\Theta$, while the specifications $\rS^{\Theta\cdot\Phi}$ and $\rS^\Phi$ track the leftmost (resp. rightmost) entry.
\end{proposition}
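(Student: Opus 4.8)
The plan is to mirror the development already carried out for the operator $\Omega$ (culminating in Proposition~\ref{prop:CMcombspec}), but for the symmetry operators $\Theta$ and $\Phi$. Recall from Observation~\ref{obs:symmetry} that each of $\C^c$, $\C^r$, $\C^{rc}$ arises by applying the appropriate combination of reverse and complement to $\C$, so it suffices to produce, from $\rS$, a bottom-to-top specification whose equations encode $\Theta(\S)$ (resp.\ $\Phi(\S)$, resp.\ $\Theta(\Phi(\S))$) for each class $\S$ appearing on the left-hand side of an equation of $\rS$.

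First I would fix $\V$ to be the set of classes on the left-hand sides of the equations of $\rS$, so that every equation is of the form $\S = f(\V)$. For the complement case, I would take as the new specification $\rS^\Theta$ the collection of equations $\Theta(\S) = g_{\Theta(\S)}(\V_\Theta)$ obtained by expanding $\Theta(f(\V))$ using linearity over $+$, the rule $\Theta(\A\times\B) = \Theta(\B)\times\Theta(\A)$, the rule $\Theta(\seq(\A)) = \seq(\Theta(\A))$, and $\Theta$ acting as the identity on atoms — this is exactly the ``expansion'' introduced just before the omitted lemma, and that lemma guarantees the right-hand sides involve only $+$, $\times$, $\seq(\cdot)$ acting on classes of $\V_\Theta$ (and no $\seq$ is introduced if $\rS$ has none). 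One then checks: (1) $\rS^\Theta$ is a genuine combinatorial specification for $\C^c$, because reversing the left-to-right order of every Cartesian product in a bottom-to-top specification exactly reverses the bottom-to-top order of entries, i.e.\ complements each permutation; (2) it is bottom-to-top by construction; (3) since $\Theta$ fixes each atom, the atom $\ZR$ (resp.\ $\ZL$) is still used exactly once per term wherever it was before, so $\rS^\Theta$ tracks the rightmost (resp.\ leftmost) entry if $\rS$ does; (4) if $\rS$ is context-free or regular then so is $\rS^\Theta$, since $\Theta$ introduces no new constructors. The $\Phi$ case is identical except that $\Phi$ preserves the order of Cartesian products and sequences but swaps $\ZR \leftrightarrow \ZL$ (fixing $\Z$ and $\ZLR$); hence $\rS^\Phi$ specifies $\C^r$, is bottom-to-top, and — as remarked in the text just after the definition of $\Phi$ — tracks the leftmost entry iff $\rS$ tracks the rightmost, and vice versa, with ``tracks both'' preserved. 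Finally, composing: $\rS^{\Theta\cdot\Phi}$ is obtained by applying the $\Theta$-expansion to each equation of $\rS^\Phi$ (equivalently, the $\Phi$-expansion to each equation of $\rS^\Theta$); by Observation~\ref{obs:symmetry} this specifies $\C^{rc}$, and the tracking claims follow by composing the two cases — $\Phi$ swaps ``rightmost'' and ``leftmost'', $\Theta$ preserves each, so the composite swaps them, matching the statement.

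The only mildly delicate point — and the one I expect to be the main obstacle — is verifying that the syntactic operation ``reverse the order of each Cartesian product'' really does implement the complement at the level of the combinatorial objects, uniformly across the whole specification, including how it interacts with the $\seq$ constructor (a sequence of copies of $\A$ read bottom-to-top becomes, under complement, a sequence of copies of $\Theta(\A)$ read in the same order, which is why $\Theta(\seq(\A)) = \seq(\Theta(\A))$ rather than a reversed sequence) and with the special atoms ($\ZR$ remains the rightmost entry of the permutation even after complementing, since complement does not move entries horizontally — hence $\Theta$ fixes $\ZR$, whereas reverse does move entries horizontally, hence $\Phi$ swaps $\ZR$ and $\ZL$). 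Once this bookkeeping is pinned down, everything else is a routine transcription of the argument of Proposition~\ref{prop:CMcombspec} with the $\Omega$-expansion replaced by the $\Theta$- and $\Phi$-expansions, which is why it is reasonable to omit the full details.
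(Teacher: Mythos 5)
Your proposal is correct and follows exactly the route the paper intends: the authors omit this proof, noting only that it ``resembles (but is simpler than) the proof of Proposition~\ref{prop:CMcombspec}'', and your argument is precisely that transcription --- expand each equation of $\rS$ under $\Theta$ and $\Phi$, invoke the analogue of Lemma~\ref{lem:expansion} for well-formedness, and read off the tracking claims from the action of $\Theta$ and $\Phi$ on the atoms $\Z$, $\ZL$, $\ZR$, $\ZLR$. Your observations about why $\Theta(\seq(\A))=\seq(\Theta(\A))$ and why $\Theta$ fixes $\ZR$ while $\Phi$ swaps $\ZL$ and $\ZR$ are exactly the bookkeeping the paper's definitions encode.
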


Equipped with Observation~\ref{obs:symmetry} and combinatorial specifications for $\C\mid\M$, $\C^c$, $\C^r$ and $\C^{rc}$, we can now complete the proof of Theorem~\ref{thm:main}.

\begin{proof}[Proof of Theorem~\ref{thm:main}]
By Proposition~\ref{prop:CMcombspec}, if the combinatorial specification $\rS$ for $\C$ tracks the rightmost entry, then there exists a specification for $\C\mid\Av(21)$ that does. To complete the proof of part~(\ref{item:CM}), therefore, we need to consider $\C\mid\Av(12)$.

By Observation~\ref{obs:symmetry}, we have $\C\mid\Av(12) = (\C^c\mid\Av(21))^c$. By Proposition~\ref{prop:TPcombspec}, there exists a combinatorial specification $\rS^\Theta$ for $\C^c$. To this we apply Proposition~\ref{prop:CMcombspec} to obtain a combinatorial specification for $\C^c\mid\Av(21)$. Finally, an application of $\Theta$ to this specification gives us a specification for $(\C^c\mid\Av(21))^c$, which is combinatorially isomorphic to $\C\mid\Av(12)$. Furthermore, all these specifications track the rightmost entry by Propositions~\ref{prop:CMcombspec} and~\ref{prop:TPcombspec}.

Similar arguments can now be made to establish part~(\ref{item:MC}) of the theorem: Observation~\ref{obs:symmetry} gives us equations for $\Av(21)\mid\C$ and $\Av(12)\mid\C$, and combinatorial specifications for these can then be computed using applications of $\Phi$, $\Theta$, and the operators in $\bop$. Note that if $\rS$ tracks the leftmost entry, then by Proposition~\ref{prop:TPcombspec} $\rS^\Phi$ and $\rS^{\Theta\cdot\Phi}$ (being specifications $\C^r$ and $\C^{rc}$) track the rightmost entry (as required for the operators in $\bop$ to append $\Av(21)$ to the right). After appending $\Av(21)$, a further application of $\Phi$ replaces the rightmost entry with the leftmost, while any application of $\Theta$ keeps the rightmost and leftmost atoms intact.

For part~(\ref{item:leftright}), note that the operators $\Theta$ and those in $\bop$ preserve the atom $\ZL$, while $\Phi$ interchanges the atoms $\ZL$ and $\ZR$, and both $\Theta$ and $\Phi$ preserve $\ZLR$. Combined with Proposition~\ref{prop:CMcombspec},  We conclude that the resulting combinatorial specifications for all of $\C\mid\Av(21)$, $\C\mid\Av(12)$, $\Av(21)\mid\C$ and $\Av(12)\mid\C$ also track both the leftmost and rightmost entries.

Finally, for part~(\ref{item:contextfree}), if $\rS$ is context-free or rational, then it is clear that the operators $\Theta$ and $\Phi$ preserve this. We know that the operators in $\bop$ preserve context-freeness by Proposition~\ref{prop:CMcombspec}, and preserve rationality by Proposition~\ref{prop:CMratcombspec}. This completes the proof.
\end{proof}

%
%
%
%
%
%
\section{Applicable classes}\label{sec-applicable-classes}

In this section, we demonstrate classes which possess combinatorial specifications that track the rightmost and leftmost entries.

\paragraph{Classes containing finitely many simple permutations}

It has been known since Albert and Atkinson~\cite{albert:simple-permutat:} that classes which possess only finitely many simple permutations admit an algebraic generating function. Subsequently, Brignall, Huczynska and Vatter~\cite{brignall:simple-permutat:} described a framework, called `query-completeness' which concluded that many other subsets of permutations in a class with only finitely many simple permutations also have algebraic generating functions: in the language of this current article, this is done by constructing context-free specifications. 

More recently, a fully algorithmic method to derive combinatorial specifications for permutation classes with only finitely many simple permutations was given by Bassino, Bouvel, Pierrot, Pivoteau, and Rossin~\cite{bassino:an-algorithm-computing:}, and we refer to that article for fuller details.

To begin, we need to adapt the notation used for inflations slightly, to handle the `bottom-to-top' requirement of our combinatorial specifications. Given a permutation $\sigma$ and permutations $\pi_1,\dots,\pi_{|\sigma|}$, the \emph{bottom-to-top inflation} of $\sigma$ by $\pi_1,\dots,\pi_{|\sigma|}$ is the permutation $\sigma[\pi_1,\dots,\pi_{|\sigma|}]^T$ formed by replacing the $\sigma^{-1}(i)$th entry of $\sigma$ by $\pi_i$. That is, the lowest entry of $\sigma$ is inflated by $\pi_1$, the second-lowest by $\pi_2$, and so on. 

In terms of specifications, if $\pi_i\in\C_i$, then the bottom-to-top inflation $\sigma[\pi_1,\dots,\pi_{|\sigma|}]^T$ belongs to the product of classes $\C_1\times \cdots\times \C_{|\sigma|}$. By virtue of the bottom-to-top inflation, this is compatible with bottom-to-top specifications. Furthermore, in order to track the rightmost entry of such an inflation, it is the rightmost entry of the permutation inflating the rightmost entry of $\sigma$, namely $\pi_{\sigma(|\sigma|)}$. Similarly, the leftmost entry of this inflation is the leftmost entry of $\pi_{\sigma(1)}$. 

\begin{proposition}\label{prop:simple-specification}
Every class containing only finitely many simple permutations admits a combinatorial specification that track the leftmost and rightmost entries.
\end{proposition}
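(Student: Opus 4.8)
The plan is to invoke the structure theory for classes with finitely many simple permutations and translate it into the bottom-to-top, left/right-tracking framework of this paper. First I would recall the key structural fact (Albert--Atkinson, and the algorithmic refinement of Bassino, Bouvel, Pierrot, Pivoteau and Rossin): if $\C$ has only finitely many simple permutations, then $\C$ admits a finite system of equations in which every permutation is obtained as an inflation $\sigma[\pi_1,\dots,\pi_{|\sigma|}]$, where $\sigma$ ranges over a finite set of ``building block'' permutations (the simple permutations of $\C$ together with $12$, $21$ and $1$), and each inflating class $\C_i$ is one of finitely many classes appearing in the system --- with the usual proviso that when $\sigma\in\{12,21\}$ one forbids re-inflating in the same direction to guarantee uniqueness of the decomposition. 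This is exactly a combinatorial specification using only $+$, $\times$ and (for the $12$ and $21$ cases, which give sums/skew-sums) the sequence constructor; in particular it is already context-free up to the $\seq$ appearing for the monotone blocks.

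Next I would reconcile this with the bottom-to-top convention. Using the notion of \emph{bottom-to-top inflation} $\sigma[\pi_1,\dots,\pi_{|\sigma|}]^T$ introduced just before the proposition, a block decomposition $\sigma[\pi_1,\dots,\pi_{|\sigma|}]^T$ with $\pi_i\in\C_i$ corresponds to the term $\C_1\times\cdots\times\C_{|\sigma|}$, read bottom to top, which is precisely what a bottom-to-top specification requires. So the only remaining task is to install the atoms $\ZL$, $\ZR$ and $\ZLR$. For each class $\S$ in the original system I would introduce decorated copies $\SL$, $\SR$, $\SLR$ (and keep the undecorated $\S$), and rewrite each equation accordingly. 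Concretely, for a term $\S=\sigma[\C_1,\dots,\C_m]^T$ I would split into cases according to which decorations are required on the left-hand side: to make $\SR$ one replaces $\C_{\sigma(m)}$ (the class inflating the rightmost entry of $\sigma$) by $\C_{\sigma(m)}^{\mathsf R}$ and leaves the others undecorated; to make $\SL$ one replaces $\C_{\sigma(1)}$ by $\C_{\sigma(1)}^{\mathsf L}$; to make $\SLR$ one decorates $\C_{\sigma(1)}$ and $\C_{\sigma(m)}$ with $\mathsf L$ and $\mathsf R$ respectively if $\sigma(1)\neq\sigma(m)$ (i.e. $|\sigma|\geq 2$), and with $\mathsf{LR}$ if $|\sigma|=1$. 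The base of the recursion is the one-entry building block $\sigma=1$, whose decorated forms are simply the atoms $\Z$, $\ZL$, $\ZR$, $\ZLR$. Since the original system is finite and each equation spawns at most four decorated equations, the new system is finite, uses only $+$, $\times$ and $\seq$, and is bottom-to-top; hence it is a combinatorial specification for $\C(=\C^{\mathsf{LR}}$ read with all four atoms available$)$ that tracks both the leftmost and rightmost entries.

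The step I expect to require the most care is the uniqueness (unambiguity) of the decomposition once the decorations are added, i.e. checking that each permutation of $\C$ is produced exactly once. The substitution decomposition theorem already guarantees a unique top-level block decomposition for any permutation that is not a sum (resp. skew-sum), and for sums/skew-sums one recovers uniqueness by the standard device of requiring the first summand to be sum-indecomposable (resp. skew-indecomposable) --- equivalently, using the $\seq$ formulation with the indecomposable class as the repeated factor. I would need to confirm that decorating a single factor with $\mathsf L$/$\mathsf R$/$\mathsf{LR}$ does not disturb this: it does not, because the leftmost (resp. rightmost) entry of an inflation $\sigma[\pi_1,\dots,\pi_{|\sigma|}]^T$ is unambiguously the leftmost (rightmost) entry of the block inflating the leftmost (rightmost) position of $\sigma$, so the decoration is forced by the decomposition rather than an extra choice. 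A minor point to check is the monotone-block case, where e.g. an increasing run decomposes as a sequence of sum-indecomposable pieces and the leftmost/rightmost atoms land in the first/last piece of that sequence; the $\seq$ action here is entirely parallel to the atom row of the table in Subsection~\ref{subsec:atoms}. With unambiguity in hand, the counting sequences match and the construction is complete; context-freeness/regularity of the output is inherited directly from the input system, exactly as for the operators in $\bop$.
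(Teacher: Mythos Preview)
Your proposal is correct and follows essentially the same approach as the paper: start from the known (Bassino et al.) context-free specification, reorder each inflation term into bottom-to-top form, and then for each class introduce decorated copies by replacing the factor corresponding to the leftmost/rightmost position of $\sigma$ with its $\mathsf{L}$/$\mathsf{R}$/$\mathsf{LR}$ version. Your additional discussion of unambiguity and of the $\seq$ constructor for sums/skew-sums is more cautious than necessary --- the Bassino et al.\ system is already unambiguous and context-free (it uses the sum-/skew-indecomposable trick rather than $\seq$), and the paper simply inherits both properties without further comment.
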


\begin{proof}
   Let $\Si(\C)$ denote the (finite) set of simple permutations in $\C$, which for convenience we will assume includes the permutations $12$ and $21$. Consider some combinatorial specification $\rS$ for $\C$, as produced e.g.\ by the algorithm of Bassino et al~\cite{bassino:an-algorithm-computing:}.

   Let $\V$ denote the set of classes and atoms arising in $\rS$, and consider any $\D\in\V$, specified by some equation $\D = f(\V)$. The function $f$ comprises a combinatorial sum of finitely many terms, each of which arises from the inflation of a permutation in $\Si(\C)$ by other classes from $\V$ (or is simply equal to an atom $\Z$). 
   For each such term, we replace the expression with the corresponding expression from the bottom-to-top inflation. Note that this amounts simply to reordering the classes in the term.
   By modifying the ordering of all terms in this way, we obtain a bottom-to-top specification $\rS'$ for $\C$. 
 
 Now, in order to track the rightmost entry, we define $\V^\mathsf{R} = \{\D^\mathsf{R} :\D\in\V\}$ to be the collection of classes that track the rightmost entry.  
 For each $\D^\mathsf{R}\in\V^\mathsf{R}$, we copy the corresponding equation for $\D$ in $\rS'$, except that in each term (corresponding to the inflation of a simple permutation) we identify the rightmost class $\F$, say, and replace it with $\F^\mathsf{R}$. 
 
 A similar process can be carried out to track the leftmost entry as required, as well as tracking both the leftmost and rightmost entries. Combining all the equations thus described for $\V$, $\V^\mathsf{R}$, $\V^\mathsf{L}$ and $\V^\mathsf{LR}$ as required, we obtain a specification for $\CR$ (or $\CL$, $\CLR$) satisfying the requirements of the proposition.
\end{proof}

\paragraph{The class $\Av(321)$}
Permutations of length $n$ in $\Av(321)$ are in bijection with Dyck paths below the diagonal, starting at $(0,0)$ and ending at $(n,n)$ via up ($\sf{U}$) and right ($\sf{R}$) steps. This bijection is well-known, and rather than present a formal definition we merely illustrate the process in Figure~\ref{fig:dyck-321}.

\begin{figure}[!ht]
\begin{center}
\begin{tikzpicture}[scale=0.3]
\draw[gray] (0,0) grid (9,9);
\draw[ultra thick, black] (0,0) -- (3,0) -- (3,2) -- (5,2) -- (5,3) -- (7,3) -- (7,4) -- (8,4) -- (8,8) -- (9,8) -- (9,9);
\end{tikzpicture}\quad 
\begin{tikzpicture}[scale=0.3]
\draw[gray] (0,0) grid (9,9);
\draw[ultra thick, black] (0,0) -- (3,0) -- (3,2) -- (5,2) -- (5,3) -- (7,3) -- (7,4) -- (8,4) -- (8,8) -- (9,8) -- (9,9);
\filldraw[black] (2.5,0.5) circle (6pt);
\filldraw[black] (4.5,2.5) circle (6pt);
\filldraw[black] (6.5,3.5) circle (6pt);
\filldraw[black] (7.5,4.5) circle (6pt);
\filldraw[black] (8.5,8.5) circle (6pt);
\end{tikzpicture}\quad
\begin{tikzpicture}[scale=0.3]
\draw[gray] (0,0) grid (9,9);
\draw[ultra thick, black] (0,0) -- (3,0) -- (3,2) -- (5,2) -- (5,3) -- (7,3) -- (7,4) -- (8,4) -- (8,8) -- (9,8) -- (9,9);
\filldraw[black] (2.5,0.5) circle (6pt);
\filldraw[black] (4.5,2.5) circle (6pt);
\filldraw[black] (6.5,3.5) circle (6pt);
\filldraw[black] (7.5,4.5) circle (6pt);
\filldraw[black] (8.5,8.5) circle (6pt);
\filldraw[black] (5.5,7.5) circle (6pt);
\filldraw[black] (3.5,6.5) circle (6pt);
\filldraw[black] (1.5,5.5) circle (6pt);
\filldraw[black] (0.5,1.5) circle (6pt);
\end{tikzpicture}\quad
\begin{tikzpicture}[scale=0.3]
\fill[white]  (0,0) rectangle +(9,9);
\draw[gray] (0,0) grid (9,9);
\filldraw[black] (2.5,0.5) circle (6pt);
\filldraw[black] (4.5,2.5) circle (6pt);
\filldraw[black] (6.5,3.5) circle (6pt);
\filldraw[black] (7.5,4.5) circle (6pt);
\filldraw[black] (8.5,8.5) circle (6pt);
\filldraw[black] (5.5,7.5) circle (6pt);
\filldraw[black] (3.5,6.5) circle (6pt);
\filldraw[black] (1.5,5.5) circle (6pt);
\filldraw[black] (0.5,1.5) circle (6pt);
\end{tikzpicture}
\end{center}
\caption{Moving from a Dyck path to its corresponding 321-avoider.}
\label{fig:dyck-321}
\end{figure}
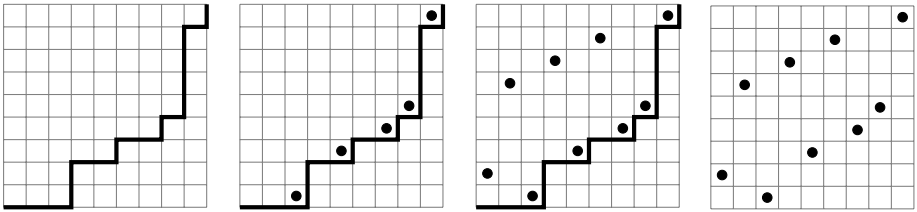

In this bijection, the rightmost entry of a 321-avoider is the first right-to-left minimum, which occurs in the final `corner', i.e.\ the first $\sf{U}$ step after the last $\sf{R}$ step.\footnote{It is also possible to identify the leftmost entry, though for brevity we have not done this here.} Thus, the class of Dyck paths $\D$ satisfies the following combinatorial specification (taking the atoms to be $\sf{R}$, $\sf{U}$, and (to mark the `rightmost corner') $\sf{U^R}$.
\begin{align*}
\D^{\sf{R}} &= \D\sf{R}\D^{\sf{R}}\sf{U}+\D \sf{RU^R}\\
\D &= \EE + \D\sf{R}\D\sf{U}
\end{align*}

Translating this to the permutation case (by mapping $\sf{U}$ to $\Z$, $\sf{U^R}$ to $\ZR$, and ignoring $\sf{R}$) yields the following specification:
\begin{align*}
\CR &= \C\CR\Z + \C\ZR\\
\C &= \EE + \C\C\Z.
\end{align*}

\paragraph{Insertion encodings}

Introduced by Albert, Linton and Ru\v{s}kuc~\cite{albert:the-insertion-e:}, the \emph{insertion encoding} of a class is a method of constructing permutations in the class by repeatedly adding a new maximal element into a number of permitted active sites called \emph{slots}. The addition of a new maximal element can be performed in one of four ways: fill the slot ($\mathsf{F}$), insert to the right of a slot ($\mathsf{R}$, leaving a slot to the left of the newly-inserted entry), insert to the left of a slot ($\mathsf{L}$), or insert in the middle of a slot ($\mathsf{M}$, leaving a slot on both sides of the newly-inserted entry). To these four basic letters, one typically uses indices to denote which slot (reading, e.g., from left to right) the action is performed on. For example, the permutation 3164752 is encoded by $\mathsf{M_1R_2F_1M_1R_2F_1F_1}$, illustrated as follows (Reading upwards):
\begin{align*}
3 1 6 &4 7 5 2 \\
3 1 6 &4 \diamond 5 2 \\
3 1 \diamond &4 \diamond 5 2 \\
3 1 \diamond &4 \diamond 2 \\
3 1& \diamond 2 \\
\diamond 1& \diamond 2 \\
\diamond &1 \diamond  \\
&\diamond
\end{align*}
By their nature, insertion encodings for permutation classes are bottom-to-top processes. Thus, in order to include classes which possess specifications given by insertion encodings amongst those to whom our method can be applied,\footnote{Note that, for a class $\C$ which possesses a regular or context-free insertion encoding, the use of our method may be overkill: for example, to count $\C\mid\Av(21)$, it may be easier simply to adjust the insertion encoding by permitting an additional slot on the right (subject to ensuring each permutation is represented uniquely, of course).} it suffices to capture how to track the rightmost (and/or the leftmost) entry. 

We make the following observation.

\begin{lemma}
In any insertion encoding of a permutation class, the rightmost entries of permutations in the class correspond to an $\mathsf{F}$ or $\mathsf{R}$ insertion into the rightmost gap.
\end{lemma}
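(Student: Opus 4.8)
The plan is to unpack the definition of the insertion encoding and argue about which letters can possibly create a rightmost entry. Recall that in an insertion encoding, we build a permutation by starting with a single slot $\diamond$ and repeatedly inserting the current maximal element via one of the letters $\mathsf{F}$, $\mathsf{L}$, $\mathsf{R}$, $\mathsf{M}$ applied to one of the active slots. At the end of the process all slots must be filled (otherwise the sequence does not encode a permutation). The key structural fact I would use is that, at every stage of the process, the entries and slots occupy positions on a horizontal line in a fixed left-to-right order, and this order is never disturbed: a slot is only ever \emph{refined} (into entry, or entry-and-slot, or slot-and-entry, or slot-entry-slot), and no new material is ever placed to the right of the rightmost object currently present \emph{except} by an $\mathsf{R}$ or $\mathsf{M}$ action on the rightmost slot, or by filling the rightmost slot with $\mathsf{F}$.

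First I would fix a permutation $\pi$ in the class and let $m = \pi(n)$ (wait --- here the rightmost entry means the entry in position $n$; its value is $\pi(n)$). Consider the moment in the insertion-encoding process when this particular entry is first created as an entry (i.e.\ when it stops being part of a slot). I claim this must happen via an $\mathsf{F}$ or $\mathsf{R}$ action: an $\mathsf{L}$ or $\mathsf{M}$ action, by definition, leaves an active slot immediately to the right of the newly inserted entry, and since that slot must eventually be filled, some entry ends up to the right of our entry --- contradicting that it is the rightmost. Second, I would argue that the slot acted upon is the rightmost active slot at that moment: if there were an active slot further to the right, it too must eventually be filled by some entry, which would then sit to the right of our entry, again a contradiction. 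Combining, the rightmost entry of $\pi$ is created by an $\mathsf{F}$ or $\mathsf{R}$ insertion into the rightmost gap (slot).

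For the converse direction --- that such an insertion does create the rightmost entry --- I would note that once an $\mathsf{F}$ fills the rightmost slot, or an $\mathsf{R}$ acts on it (leaving the residual slot to the \emph{left} of the new entry), there is no active slot to the right of that entry, so no future insertion can land to its right (every future letter refines an existing slot, all of which lie at or to the left of this entry); hence it remains the rightmost entry permanently. This gives the biconditional characterisation, and since the rightmost entry is also, by construction, the maximum among the entries inserted at or after that step but this is irrelevant --- the statement only asserts the positional characterisation.

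The main obstacle, and the only point requiring a little care, is making precise the invariant that "the left-to-right order of entries and slots is preserved and nothing is inserted to the right of the rightmost object except via $\mathsf{R}$/$\mathsf{M}$/$\mathsf{F}$ on the rightmost slot." This is intuitively obvious from the picture (as in the $3164752$ example above, reading upwards), but a fully rigorous treatment would induct on the length of the insertion sequence, tracking the ordered sequence of entries-and-slots and checking each of the four letter-types against each slot position. Since this invariant is essentially the defining feature of the insertion encoding and is implicit in \cite{albert:the-insertion-e:}, I would state it briefly and cite that source rather than reprove it in detail.
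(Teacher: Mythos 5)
Your argument is correct. The paper states this lemma without any proof at all (it is introduced merely as ``the following observation''), so there is no authorial argument to compare against; what you have written is precisely the reasoning the authors leave implicit: an $\mathsf{L}$ or $\mathsf{M}$ insertion leaves a slot immediately to the right of the new entry, and an insertion into a non-rightmost slot leaves a slot further right, and in either case that slot must eventually be filled, placing an entry to the right --- so the rightmost entry can only arise from $\mathsf{F}$ or $\mathsf{R}$ applied to the rightmost slot, and conversely such an insertion leaves nothing active to its right, so it stays rightmost. Your deferral of the order-preservation invariant to~\cite{albert:the-insertion-e:} is reasonable, since it is the defining feature of the encoding. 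One trivial slip in an aside you yourself mark as irrelevant: since each step inserts a new \emph{maximal} element, the rightmost entry is the \emph{minimum}, not the maximum, of the entries inserted at or after its step.
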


For example, the following grammar (given in Albert,Linton, Ru\v{s}kuc~\cite{albert:the-insertion-e:}) defines the insertion encoding of $\Av(312)$. (Note that, as there is only one active slot, we have omitted all indices.)
\[\mathsf{s} \rightarrow \mathsf{F} \mid \mathsf{Ls}\mid \mathsf{Rs}\mid \mathsf{Mss}.\]
To adapt this encoding to track the rightmost entry, we introduce two new symbols, $\mathsf{F^R}$ and $\mathsf{R^R}$, corresponding to the insertion of the rightmost entry: thus, any word in this encoding has exactly one of these two symbols, occurring exactly once. The grammar becomes:
\begin{align*}
\mathsf{s^R} &\rightarrow \mathsf{F^R} \mid \mathsf{Ls}\mid \mathsf{R^Rs}\mid \mathsf{Mss^R}\\
\mathsf{s} &\rightarrow \mathsf{F} \mid \mathsf{Ls}\mid \mathsf{Rs}\mid \mathsf{Mss}
\end{align*}
where $\mathsf{s^R}$ denotes the encoding for the class with the rightmost entry encoded separately. Converting to combinatorial specifications for the class $\Av(312)$, we obtain
\begin{align*}
\CR &= \ZR+\Z\CR+\ZR\C+\Z\C\CR\\
\C &= \Z + \Z\C	+\Z\C + \Z\C\C.
\end{align*}
This specification can be simplified slightly by admitting the empty permutation into $\C$:
\begin{align*}
\CR &= \ZR\C+\Z\C\CR\\
\C &= \EE + \Z\C\C.
\end{align*}

Similar modifications can be made to track the leftmost entry; we omit the details.
%
%
%
%
%
%
%
%
\section{Examples}\label{sec-examples}

In this section, we give three examples of the application of our operators to derive combinatorial specifications (and hence enumerations) for juxtapositions.

\subsection{\texorpdfstring{$\Av(321)\mid \Av(21)$}{Av(321)|Av(21)}}

Juxtaposing the 321-avoiding permutations with a monotone increasing sequence was first carried out by the current authors~\cite{bs:juxt-catalan}. We repeat that enumeration here using the rightmost-entry-tracking specification obtained in the previous section:
\begin{align*}
\CR &= \C\CR\Z + \C\ZR\\
\C &= \EE + \C\C\Z.
\end{align*}
From the specification above, together with the master equation,
\[\CR\mid\Av(21)=\EE+\SZ\times\ZR+\opi(\CR)+\opii(\CR)+\opio(\CR)\times \SZ\times\ZR.\]
we obtain expansions for the classes in $\V_\bop$, where $\V=\{\C,\CR\}$. To aid readability, we use the shorthand $\CR_\i = \opi(\CR)$, $\C_\ii= \opii(\C)$ and so on.
\begin{align*}
\CR_\i &= \C_\i\CR_\o\Z + \C_\o\CR_\i\Z + \C_\i\Z + \C_\o\ZR\Z\\
\CR_\ii &= \C_\ii(\CR_\o\Z + \Z) +\C_\io(\CR_\oi\Z +\CR_\oo\SZ\ZR\Z + \SZ\ZR\Z) +\C_\o(\CR_\ii\Z + \CR_\io\SZ\ZR\Z+  \Z\SZ\ZR\Z)\\
\CR_\oi &= \C_\oi(\CR_\o\Z + \Z)+ \C_\oo(\CR_\oi\Z + \CR_\oo\SZ\ZR\Z +  \SZ\ZR\Z)\\
\CR_\io &= \C_\io(\CR_\oo\SZ\Z + \SZ\Z) + \C_\o(\CR_\io\SZ\Z + \Z\SZ\Z)\\
\CR_\oo &= \C_\oo(\CR_\oo\SZ\Z + \SZ\Z)\\
\CR_\o &= \C_\o(\CR_\o\Z + \Z)\\
\C_\i &= \C_\i\C_\o\Z + \C_\o\C_\i\Z + \C_\o\C_\o\ZR\Z\\
\C_\ii &= \C_\ii\C_\o\Z + \C_\io(\C_\oi\Z + \C_\oo\SZ\ZR\Z) + \C_\o(\C_\ii\Z + \C_\io\SZ\ZR\Z + \C_\o\Z\SZ\ZR\Z)\\
\C_\oi &= \C_\oi\C_\o\Z + \C_\oo(\C_\oi\Z + \C_\oo\SZ\ZR\Z)\\
\C_\io &= \C_\io\C_\oo\SZ\Z + \C_\o(\C_\io\SZ\Z + \C_\o\Z\SZ\Z)\\
\C_\oo &= \EE + \C_\oo\C_\oo\SZ\Z\\
\C_\o &= \EE + \C_\o\C_\o\Z_\o.
\end{align*}

Solving, simplifying and converting into generating functions yields the following,
\newcommand{\catsqrt}{\sqrt{1-4z}}
\[ f(z)  = -\frac{1-\catsqrt + z(-4+\catsqrt + \sqrt{1-5z}/\sqrt{1-z})}{2z^2}.\]
which corresponds to OEIS sequence A278301~\cite{oeis}.

\subsection{\texorpdfstring{$\Av(21)\mid\Av(21)\mid\Av(21)$}{Av(21)|Av(21)|Av(21)}}
In this example, we will demonstrate the use of the operators acting on sequences, together with the use of the $\Phi$ and $\Theta$ operators. For this, our construction of the double juxtaposition $\Av(21)\mid\Av(21)\mid\Av(21)$ will be somewhat contrived, appending one class $\Av(21)$ to the right, then appending the other increasing sequence to the left (via symmetries).

We begin with the following regular specification for $\Av(21)$ that tracks both the leftmost and rightmost entries:
\[\C = \ZLR + \ZL\seq(\Z)\ZR.\]
Since we need to keep track of the leftmost entry, we use the following equation for our first juxtaposition.
\[\C\mid\Av(21) =\EE+\ZLR +\ZL\seq(\Z)\ZR+\opi(\C)+\opii(\C)+\opio(\C)\seq(\Z)\ZR.\]
However, as we will only be appending to each side once, we do not need to keep track of the rightmost entry in the above expression. We can therefore use the following simplified version:
\begin{equation}\label{eq:simplified}
	\C\mid\Av(21) =\EE+\ZL\seq(\Z) +\opio(\C)\seq(\Z).
\end{equation}
The only term we need to expand in the above is $\opio(\C)$, and we do this first, separately. For ease of viewing, we use $\SZ$ for $\seq(\Z)$, and make use of expressions such as $\EE + \SZ\Z = \SZ$ to simplify the presentation. Similarly, we will also use the superscript $+$ to denote non-empty sequences, thus we have, for example, $\SZp = \SZ\Z = \Z\SZ$ and $\seqp(\A) = \seq(\A)\A$.
\[
\opio(\C) = \left(\SZp\ZL  + \ZL\SZp\SZp\right) \seq(\SZp).
\]
Thus
\[
\C\mid\Av(21) =\EE+\ZL\SZ +\left(\SZp\ZL  + \ZL\SZp\SZp\right) \seq(\SZp)\SZ.
\]

Momentarily evaluating this as a generating function, we have
\[f_{\C\mid\Av(21)}(z) = \frac{1}{1-z} + \frac{z^2}{(1-z)^2(1-2z)}\]
as expected.

We now turn to appending $\Av(21)$ to the left of $\C\mid\Av(21)$. First, we need to apply the operators $\Phi$ and $\Theta$ to the specification above, yielding the following specification for $\Av(21)\mid\C^{rc}$:
\begin{align*}
\Av(21)\mid\C^{rc} &=\EE+\SZ\ZR + \SZ \seq(\SZp) \left(\ZR\SZp  + \SZp\SZp \ZR\right)\\
&=\EE+\SZ\ZR +  \seq(\SZp)\left(\SZ\ZR\SZp  +  \SZp\SZp \SZ\ZR\right)
\end{align*}
Again, we will use the simplified equation~(\ref{eq:simplified}) to find the specification for $\Av(21)\mid\C^{rc}\mid\Av(21)$, without keeping track of the rightmost entry. This requires us to compute $\opio(\Av(21)\mid\C^{rc})$:
\begin{align*}
\opio(\Av(21)\mid\C^{rc}) &= \SZp\seqp(\SZp) + \seqp(\SZp)\seqp(\SZp)\seqp(\seqp(\SZp))\seqp(\SZp)\seq(\SZp)  \\
&\qquad + \seqp(\SZp) \left[  \seq(\SZp)\seqp(\SZp) + \SZp\seqp(\SZp) + \SZp\SZp  \right]\seqp(\SZp).
\end{align*}
Inserting this into equation~(\ref{eq:simplified}) yields a specification for $\Av(21)\mid\C^{rc}\mid\Av(21)$. Moving to generating functions, and noting that the classes $\Av(21)\mid\C^{rc}\mid\Av(21)$ and $\Av(21)\mid\C\mid\Av(21)$ are identical (and hence equinumerous), we obtain the following generating function. 
\[
f_{\Av(21)\mid\Av(21)\mid\Av(21)} = \frac{1}{1-z} + \frac{z^2}{(1-z)^2(1-2z)} + \frac{z^3(1+z-4z^2)}{(1-z)^3(1-2z)^2(1-3z)}.
\]
This agrees with the expression given in Bevan's thesis~\cite[page~34]{bevan:phd}, and gives sequence A326355 in the OEIS~\cite{oeis}.

\subsection{Separable permutations}

  The following is an illustrated bottom-to-top specification for the class of \emph{non-empty} separable permutations, $\Av(2413,3142)$:
  \begin{align*}
    \C &= \Z + \cplusc{\Cp}{\C} + \cminusc{\C}{\Cm}\\
    \Cm &= \Z + \cplusc{\Cp}{\C}\\
    \Cp &=\Z + \cminusc{\C}{\Cm}
  \end{align*}
  where $\Cm$ and $\Cp$ denote the skew-indecomposable and sum-indecomposable permutations in $\C$, respectively. The following specification then tracks the rightmost entry:
  \begin{align*}
    \CR &= \ZR + \Cp\CR + \CR\Cm\\
    \C &= \Z + \Cp\C + \C\Cm\\
    \Cm &= \Z + \Cp\C\\
    \Cp &= \Z + \C\Cm.
  \end{align*}
Note that there are other specifications that carry out the same process, but this one has been chosen to minimise the number of combinatorial classes required in the final specification for the juxtaposition we will compute below, namely the juxtaposition of the separable permutations with the class $\Av(21)$. 

First, although we do not need to know the basis of this class for our method, we can compute that $\C\mid\Av(21)$ is equivalent to
\[\Av(25143,35142,35241,41532,42531,241365,251364,314265,315264,415263).\]
  
To describe the complete specification for $\CR\mid\Av(21)$ that tracks the rightmost entry requires 26 separate equations. Instead, here we will again drop the rightmost tracking in order to compute the generating function for $\CR\mid\Av(21)$ using the simplified equation~(\ref{eq:simplified}). This requires the following 11 equations (together with the equation for $\SZ$, and the equation for the class itself).
\begin{align*}
\CR_\io &= \Z\SZ\Z + \Cp_\io\CR_\oo + \Cp_\o\CR_\io + \CR_\io \Cm_\oo\\
\CR_\oo &= \SZ\Z +\Cp_\oo\CR_\oo + \CR_\oo\Cm_\oo\\
\C_\io &= \Z\SZ\Z + \Cp_\io\C_\oo + \Cp_\o\C_\io + \C_\io\Cm_\oo + \C_\o\Cm_\io\\
\C_\oo &= \SZ\Z + \Cp_\oo\C_\oo + \C_\oo\Cm_\oo\\
\C_\o &= \Z+\Cp_\o\C_\o + \C_\o\Cm_\o\\
\Cm_\io &= \Z\SZ\Z + \Cp_\io\C_\oo + \Cp_\o\C_\io\\
\Cm_\oo &= \SZ\Z + \Cp_\oo\C_\oo \\
\Cm_\o &= \Z+\Cp_\o\C_\o\\
\Cp_\io &= \Z\SZ\Z + \C_\io\Cm_\oo + \C_\o\Cm_\io\\
\Cp_\oo &=\SZ\Z + \C_\oo\Cm_\oo \\
\Cp_\o &= \Z + \C_\o\Cm_\o
\end{align*}
Solving, we obtain the following
\begin{multline*}
	f_{\C\mid\Av(21)}(z) = \frac{(2-4z+z^2)xy}{4(1-z)(-2+7z-7z^2+z^3)} +\\ \frac{(-2+10z-15z^2+7z^3)x + (2-6z+z^2+6z^3-z^4)y -10+54z-99z^2+66z^3-9z^4}{4(1-z)^2(-2+7z-7z^2+z^3)}
\end{multline*}
where 
\[ x=\sqrt{1-6z+z^2} \quad\text{and}\quad y=\sqrt{1-8z+8z^2}.\]
The first ten terms of the counting sequence are
\[ 1,1,2,6,24,115,609,3409,19728,116692,701062.\] 
This is sequence A326348 in OEIS~\cite{oeis}.
%
%
%
%
%
%
%
%
\section{Concluding remarks}\label{sec-conclusion}

\paragraph{Acyclic grids with one non-monotone}
A natural next step beyond the $k\times 1$ grids considered here are families of \emph{acyclic} grids in which at most one cell is non-monotone. This poses a number of challenges beyond the methods described in this article, notably to incorporate the ability to switch from a bottom-to-top specification to a left-to-right one, depending on where a new monotone cell needs to be appended. Another challenge is to identify some canonical way to handle griddings that is consistent in both directions.

\paragraph{Multiple non-monotone}
One can likely replace the monotone classes in our framework with certain other easy-to-describe classes, such as the class $\Av(312,231)$ of layered permutations.  This adds considerably to the number and complexity of the operators required, but the same principles apply.

On the other hand, our approach does not seem to offer insight for juxtapositions involving more complicated classes, for example juxtaposing two classes with algebraic generating functions. A notable example of such a juxtaposition is $\Av(132)\mid\Av(213)$, which is the superclass of the ``domino'' used in recent bounds on the growth rate of $\Av(1324)$~\cite{bbep:1324}.

\bibliographystyle{acm}
\bibliography{../refs}

\end{document}